\theoremstyle{definition}
\newtheorem{theorem}{Theorem}[section]
\newtheorem{lemma}[theorem]{Lemma}
\newtheorem{proposition}[theorem]{Proposition}
\newtheorem{remark}[theorem]{Remark}
\newtheorem{definition}[theorem]{Definition}
\newtheorem{eg}[theorem]{Example}
\newtheorem{conjecture}[theorem]{Conjecture}
\newtheorem{corollary}[theorem]{Corollary}
\newtheorem{question}[theorem]{Question}
\newtheorem*{theorem*}{Theorem}
\newtheorem*{proposition*}{Proposition}
\begin{document}

\title{IND-\'ETALE VS FORMALLY \'ETALE}

\author{Shubhodip Mondal}
\address{Max-Planck-Institut f\"ur Mathematik, Vivatsgasse 7, 53111 Bonn, Germany }
\email{mondal@mpim-bonn.mpg.de}
\author{Alapan Mukhopadhyay}
\address{Department of Mathematics, University of Michigan, 530 Church Street,
  Ann Arbor, MI 48109}
\email{alapanm@umich.edu}

\date{}

\maketitle
\begin{abstract}
    We show that when $A$ is a reduced algebra over a characteristic zero field $k$ and the module of K\"ahler differentials $\Omega_{A/k}=0$, then $A$ is ind-\'etale, partially answering a question of Bhatt. As further applications of this result, we deduce a rigidity property of Hochschild homology and special instances of Weibel's conjecture \cite{weibel} and Vorst's conjecture \cite{vorst} without any noetherian assumptions. 
    
\end{abstract}

\tableofcontents
\section{Introduction}
In this article, all rings are commutative and unital, unless otherwise mentioned.

Fix a field $k$. Recall that a finite type $k$-algebra $R$ is \textit{\'etale over $k$} if the module of K\"ahler differentials $\Omega_{R/k}$ is zero.
\begin{definition}
A $k$-algebra is said to be \textit{ind-\'etale} if as a $k$-algebra, it is isomorphic to a direct limit of some direct system of \'etale algebras over $k$.
\end{definition}

If $R$ is an \'etale algebra, then the cotangent complex $\mathbb{L}_{R/k}$ is exact, i.e., $H^i (\mathbb{L}_{R/k})=0$ for all $i \in \mathbb Z.$ We refer to \cite[Tag 08P5]{Stacksproject} for the definition and basic properties of the cotangent complex. When $A$ is a smooth algebra over $k$, then the cotangent complex agrees with the module of K\"ahler differentials. But in general, the cotangent complex is a complex of $A$-modules or more naturally, an object in the derived category $D(A)$ of chain complexes over A. Since the formation of cotangent complex commutes with taking direct limits, it follows that for an ind-\'etale algebra $A$, the cotangent complex $\mathbb{L}_{A/k}$ is exact.

In \cite{BhOriginal}, Bhargav Bhatt raised the following question asking whether conversely exactness of $\mathbb{L}_{A/k}$ -- i.e., $A$ being \textit{formally \'etale} implies ind-\'etaleness of $A$.
\begin{question}[Bhatt]\label{main question}
Let $k$ be a field of characteristic zero. Does there exist a $k$-algebra $A$, such that the cotangent complex $\mathbb{L}_{A/k}$ is exact, yet $A$ is not ind-\'etale over $k$? \textbf{\textendash}  see \cite[Question 0.3]{BhOriginal} and \cite[Question C.3]{Morrow}.
\end{question}
Note that ind-\'etale algebras are necessarily reduced. In this note, we answer the question above when $A$ is additionally assumed to be reduced.
\vspace{2mm}

\begin{theorem}\label{t: formally etale implies ind-etale in the introduction}
(see \Cref{formally unramified plus reduced implies ind-etale})
Let $k$ be a field of characteristic zero and $A$ be a reduced $k$-algebra -- not assumed to be noetherian. If $\Omega_{A/k}=0$, then $A$ is ind-\'etale.     
\end{theorem}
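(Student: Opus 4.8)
The plan is to reduce the statement to a claim about finitely generated subalgebras, exploiting that each of these is Noetherian even though $A$ need not be. Concretely, write $A = \varinjlim_B B$ as the filtered colimit of its finitely generated $k$-subalgebras $B$ (directed since two such are jointly contained in the subalgebra generated by all their generators), with transition maps the inclusions. Since this is a filtered colimit and ind-\'etaleness only constrains the objects of the diagram to be \'etale over $k$, it suffices to prove that every finitely generated $k$-subalgebra $B \subseteq A$ is finite \'etale over $k$. Each such $B$ is reduced, being a subring of the reduced ring $A$, and Noetherian, being of finite type over the field $k$; so the real content is to show $\dim B = 0$, after which $B$ is reduced and Artinian, hence a finite product of field extensions of $k$ that are finite (Zariski's lemma) and separable (characteristic zero), i.e. finite \'etale.

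The essential input is a constraint on residue fields coming from $\Omega_{A/k} = 0$. Since the formation of K\"ahler differentials commutes with localization, $\Omega_{A_\mathfrak{p}/k} = (\Omega_{A/k})_\mathfrak{p} = 0$ for every prime $\mathfrak{p}$ of $A$; feeding this into the right-exact conormal sequence for the surjection $A_\mathfrak{p} \twoheadrightarrow \kappa(\mathfrak{p})$ yields $\Omega_{\kappa(\mathfrak{p})/k} = 0$. By the standard differential criterion for field extensions, in characteristic zero this forces $\kappa(\mathfrak{p})/k$ to be algebraic. Thus every residue field of $A$ is an algebraic extension of $k$.

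To transport this to $B$, I would use that minimal primes of a subring are contracted from the overring: for each minimal prime $\mathfrak{q}$ of $B$, injectivity of $B \hookrightarrow A$ shows the localization $(B \setminus \mathfrak{q})^{-1} A$ is nonzero (its vanishing would force some nonzero $s \in B\setminus\mathfrak{q}$ to map to $0$), so there is a prime $\mathfrak{p}$ of $A$ with $\mathfrak{p} \cap B = \mathfrak{q}$, giving an embedding $\operatorname{Frac}(B/\mathfrak{q}) \hookrightarrow \kappa(\mathfrak{p})$. Hence $\operatorname{Frac}(B/\mathfrak{q})$ is algebraic over $k$, every element of the domain $B/\mathfrak{q}$ is integral over the field $k$, and so $B/\mathfrak{q}$ is itself a field. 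Therefore every minimal prime of $B$ is maximal, i.e. $\dim B = 0$, completing the argument as above. (The same reasoning applied to $A$ itself shows $A$ is reduced of Krull dimension zero, i.e. von Neumann regular, which is a useful sanity check.)

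The step I expect to be the crux — the one that makes the hypotheses do their work — is precisely this reduction to finitely generated, hence Noetherian, subalgebras combined with the residue-field computation: because $A$ is not assumed Noetherian one cannot directly localize $\Omega_{A/k}$ onto an arbitrary subalgebra $B$, and the strength of ``reduced $+$ formally unramified'' only becomes visible after passing to the residue fields of $A$ and dominating the generic points of $B$ by points of $A$. Everything else (quotients of finite \'etale algebras being finite \'etale, and assembling the colimit) is formal. What remains to verify carefully is that the criterion $\Omega_{K/k} = 0 \Rightarrow K/k$ algebraic is invoked correctly, and to pinpoint where reducedness is genuinely used: it enters exactly in guaranteeing that each $B$ is reduced, so that a zero-dimensional Noetherian $B$ is a product of fields rather than a nonreduced Artinian ring.
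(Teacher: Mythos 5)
Your argument is correct and follows essentially the same strategy as the paper: reduce to finitely generated $k$-subalgebras $B$, show $\dim B=0$ using the characteristic-zero fact that a field extension with vanishing K\"ahler differentials is algebraic, transported via a localization of $A$ at a prime avoiding a multiplicative set coming from the subring (the paper's \Cref{a multiplicative set avoids a minimal prime}), and conclude with the reduced Artinian decomposition into finite separable field extensions. The only difference is packaging: the paper phrases the key step as ``$\Omega_{A/k}=0$ implies $A$ is integral over $k$'' (\Cref{unramified implies integral corollary}, deduced from the nonvanishing of $da$ for $a$ transcendental), whereas you phrase it as ``all residue fields of $A$ are algebraic over $k$'' and pull back along primes of $A$ dominating the minimal primes of $B$ --- the same localization trick and field-theoretic input, arranged in a different order.
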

Since the module of K\"ahler differentials is the zeroth cohomology of the cotangent complex, the exactness of 
$\mathbb{L}_{A/k}$ implies $\Omega_{A/k}=0$. So our theorem partially answers \Cref{main question} by showing that when the algebra $A$ is known to be reduced, then the much \textit{weaker} assumption of $\Omega_{A/k}=0$, implies that $A$ is ind-\'etale. Thus, \Cref{t: formally etale implies ind-etale in the introduction} now reduces \Cref{main question} to the following question:
\begin{question}
Let $k$ be a field of characteristic zero. Does there exist a $k$-algebra $A$ such that the cotangent complex $\mathbb{L}_{A/k}$ is exact, yet $A$ is not reduced?
\end{question}

\begin{remark}Note that, merely assuming $\Omega_{A/k}=0$ and $k$ is a field of characteristic zero does \textit{not} imply reducedness of $A$, as is shown by an example originally due to Ofer Gabber. See \Cref{Gabber's counterexample} and \cite[Theorem 2.2]{Alapan}.
\end{remark}
The main difficulty in proving \Cref{t: formally etale implies ind-etale in the introduction} is in dealing with the lack of any finiteness or noetherian assumptions. The key new ingredient in the proof is the observation of the following general result, whose proof, in turn makes judicious use of localization constructions and minimality tricks to circumvent issues caused by the lack of noetherian assumption.

\begin{proposition}[see \Cref{unramified implies integral}]\label{propb}
Let $k$ be a field of characteristic zero and $A$ be a $k$-algebra -- not assumed to be noetherian. Suppose that there is a $k$-algebra injection from the polynomial ring, $\iota: k[t_1, \ldots, t_s] \hookrightarrow A$ for some $s \geq 1$. Then $d\iota(t_1) \wedge d\iota(t_2) \wedge \ldots \wedge d\iota(t_s)$ is a nonzero element of $\wedge^s_A \Omega_{A/k}$.
\end{proposition}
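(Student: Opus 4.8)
The plan is to reduce the nonvanishing of $d\iota(t_1) \wedge \cdots \wedge d\iota(t_s)$ to a statement about a field extension, where the characteristic-zero hypothesis can be exploited through separability. Write $x_i := \iota(t_i)$ and $B := \iota(k[t_1,\ldots,t_s]) \subseteq A$, so that $B \cong k[t_1,\ldots,t_s]$ and $S := B \setminus \{0\}$ is a multiplicative subset of $A$. Since $\iota$ is injective, no nonzero element of $B$ becomes zero in $A$; hence $0 \notin S$ and the localization $S^{-1}A$ is nonzero. Choosing any prime ideal $\mathfrak{q} \subset A$ disjoint from $S$ (such a prime exists because $S^{-1}A \neq 0$ has a maximal ideal, which we pull back), the quotient $A/\mathfrak{q}$ is a domain into which $B$ injects, so its fraction field $L := \mathrm{Frac}(A/\mathfrak{q})$ contains $K := k(t_1,\ldots,t_s) = \mathrm{Frac}(B)$ as a subfield. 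I emphasize that producing $\mathfrak{q}$ uses only the existence of maximal ideals (Zorn's lemma) and no noetherian hypothesis.

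By functoriality of K\"ahler differentials, the composite $k$-algebra map $A \to A/\mathfrak{q} \hookrightarrow L$ induces a map $\wedge^s_A \Omega_{A/k} \to \wedge^s_L \Omega_{L/k}$ carrying $dx_1 \wedge \cdots \wedge dx_s$ to $dt_1 \wedge \cdots \wedge dt_s$, since each $x_i$ maps to $t_i \in K \subseteq L$. Thus it suffices to prove that $dt_1 \wedge \cdots \wedge dt_s$ is nonzero in $\wedge^s_L \Omega_{L/k}$.

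For this I would use the tower $k \subseteq K \subseteq L$. As $K = k(t_1,\ldots,t_s)$ is purely transcendental, $\Omega_{K/k}$ is free over $K$ of rank $s$ with basis $dt_1,\ldots,dt_s$, so $dt_1 \wedge \cdots \wedge dt_s$ generates $\wedge^s_K \Omega_{K/k} \cong K$ and is in particular nonzero. The crucial input is that, $k$ having characteristic zero, the extension $L/K$ is separable; consequently the first fundamental exact sequence $L \otimes_K \Omega_{K/k} \to \Omega_{L/k} \to \Omega_{L/K} \to 0$ is short exact and split, so $L \otimes_K \Omega_{K/k} \to \Omega_{L/k}$ is a split injection. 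Applying $\wedge^s_L$ to this split injection keeps it injective, and $dt_1 \wedge \cdots \wedge dt_s$ generates the rank-one free module $\wedge^s_L(L \otimes_K \Omega_{K/k})$; hence its image in $\wedge^s_L \Omega_{L/k}$ is nonzero, as desired.

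I expect the main obstacle to be the passage to a field quotient without any finiteness assumptions, which is precisely where the localization at $S = B \setminus \{0\}$ does the work; the other essential hypothesis is characteristic zero, and it enters only through the separability of $L/K$. That this hypothesis cannot be dropped is visible already for $k = \mathbb{F}_p$, $A = k[t^{1/p}]$ and $\iota(t) = (t^{1/p})^p$, where $d\iota(t) = 0$, so any correct argument must invoke separability at exactly this step.
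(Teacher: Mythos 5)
Your proof is correct, and its overall skeleton matches the paper's: both arguments reduce to a field by localizing $A$ at the multiplicative set $S=\iota(k[t_1,\ldots,t_s])\setminus\{0\}$ (the paper passes through $A/\mathrm{nil}(A)$ and a minimal prime avoiding $S$, so that the localization is already a field; you take any prime of $A$ disjoint from $S$, quotient, and pass to the fraction field --- a cosmetic difference), and both ultimately invoke characteristic-zero separability. Where you genuinely diverge is in the field case. The paper extends $\iota(t_1),\ldots,\iota(t_s)$ to a full transcendence basis, lets $L$ be the purely transcendental subfield it generates, and uses the base-change isomorphism $\Omega_{L/k}\otimes_L L'\cong\Omega_{L'/k}$ for finite separable extensions $L'$ together with a direct limit to exhibit $\Omega_{A/k}$ as a free module with an explicit basis containing the $d\iota(t_i)$. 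You instead apply the left-exactness (hence split injectivity, these being vector spaces) of the first fundamental sequence $L\otimes_K\Omega_{K/k}\to\Omega_{L/k}\to\Omega_{L/K}\to 0$ for the separable extension $L/K$ with $K=k(t_1,\ldots,t_s)$, and then use that $\wedge^s$ preserves split injections. Your route avoids choosing a transcendence basis and the limit argument, at the cost of citing the (standard, but less elementary) fact that separability of $L/K$ forces injectivity of $L\otimes_K\Omega_{K/k}\to\Omega_{L/k}$; the paper's route is more hands-on and yields the stronger statement that $\Omega_{A/k}$ is free on the differentials of a transcendence basis when $A$ is a field. Both correctly isolate characteristic zero as entering only through separability, consistent with the paper's counterexample $k[x]\hookrightarrow k[x^{1/p}]$ in characteristic $p$.
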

When phrased in terms of \textit{transcendence cardinality} introduced in \Cref{d:trans cardinality}, \Cref{propb} implies that vanishing of $\wedge^s_A \Omega_{A/k}$ forces the $k$-transcendence cardinality of $A$ to be at most $s-1$.

\begin{remark}

The analogue of \Cref{main question} in positive characteristics has a negative answer. There is an example due to Bhatt of a positive characteristic field $k$ and a $k$-algebra $A$ such that the cotangent complex $\mathbb{L}_{A/k}$ is exact, but $A$ is not reduced, thus not ind-\'etale -- see \cite[Proposition 0.2]{BhOriginal}, where the example is attributed to Ofer Gabber. 
\end{remark}

We will use \Cref{t: formally etale implies ind-etale in the introduction} to prove the following result about vanishing of Hochschild homology. We point out that in \cite{10.1155/S1073792892000035}, certain vanishing of Hochschild homology $\mathrm{HH}_i(A)$ for an algebra $A$ had been used to give a criteria of smoothness.

\begin{proposition}[see \Cref{hhhh}]\label{HH_1}Let $k$ be a field of characteristic $0$. Let $A$ be a reduced commutative $k$ algebra -- not assumed to be noetherian. If $\mathrm{HH}_1(A/k)= 0,$ then $\mathrm{HH}_i(A/k)= 0$ for all $i \ge 1.$

\end{proposition}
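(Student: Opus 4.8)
The plan is to use the structural connection between Hochschild homology and the cotangent complex in characteristic zero, combined with Theorem~\ref{t: formally etale implies ind-etale in the introduction}. The Hochschild-Kostant-Rosenberg (HKR) philosophy in characteristic zero gives a functorial decomposition $\mathrm{HH}_n(A/k) \cong \bigoplus_{i} H_{n-i}(\wedge^i \mathbb{L}_{A/k})$, where the $i$-th summand is the homology of the $i$-th derived exterior power of the cotangent complex (this is the Hodge decomposition of Hochschild homology, valid for any commutative $k$-algebra when $k$ has characteristic zero). In low degrees this reads $\mathrm{HH}_1(A/k) \cong H_1(\mathbb{L}_{A/k}) \oplus \Omega_{A/k}$, since the weight-zero piece of $\mathrm{HH}_1$ vanishes and the only contributions are the weight-one piece $H_0(\mathbb{L}_{A/k}) = \Omega_{A/k}$ together with $H_1(\mathbb{L}_{A/k})$.

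First I would extract from the hypothesis $\mathrm{HH}_1(A/k) = 0$ that its summand $\Omega_{A/k}$ vanishes. Since $A$ is reduced and $k$ has characteristic zero, Theorem~\ref{t: formally etale implies ind-etale in the introduction} then applies to conclude that $A$ is ind-\'etale over $k$. Next I would use that ind-\'etaleness forces the entire cotangent complex $\mathbb{L}_{A/k}$ to be exact: as noted in the introduction, the cotangent complex commutes with filtered colimits, and each \'etale $k$-algebra has exact cotangent complex, so $\mathbb{L}_{A/k} \simeq 0$ in $D(A)$. Consequently every derived exterior power $\wedge^i \mathbb{L}_{A/k}$ is also acyclic, and thus every Hodge summand $H_{n-i}(\wedge^i \mathbb{L}_{A/k})$ vanishes, giving $\mathrm{HH}_n(A/k) = 0$ for all $n \geq 1$.

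The main obstacle is justifying the Hodge (HKR) decomposition of Hochschild homology in this non-noetherian, non-finite-type generality, since the classical HKR theorem is usually stated for smooth or at least finitely generated algebras. I would address this by invoking the derived version: for any commutative $k$-algebra with $k$ of characteristic zero, there is a natural filtration on $\mathrm{HH}(A/k)$ whose associated graded pieces are the shifted derived exterior powers $(\wedge^i \mathbb{L}_{A/k})[i]$; this holds with no finiteness hypotheses because both sides commute with filtered colimits and every $A$ is a filtered colimit of finite-type (indeed polynomial) $k$-algebras, where the statement is classical. The only subtle point is confirming that once $\mathbb{L}_{A/k} \simeq 0$, the vanishing of all graded pieces forces the vanishing of the homotopy groups of the filtered object $\mathrm{HH}(A/k)$, which follows since the filtration is exhaustive and complete (or simply because a colimit of zero objects is zero). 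Alternatively, to avoid the full HKR machinery, one could argue directly from ind-\'etaleness: Hochschild homology commutes with filtered colimits of algebras, and for an \'etale $k$-algebra $R$ one has $\mathrm{HH}_i(R/k) = 0$ for $i \geq 1$, so the colimit computing $\mathrm{HH}_i(A/k)$ is a colimit of zero groups for each $i \geq 1$.
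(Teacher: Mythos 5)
Your proposal is correct and follows essentially the same route as the paper: deduce $\Omega_{A/k}=0$ from $\mathrm{HH}_1(A/k)\simeq\Omega_{A/k}$, invoke \Cref{formally unramified plus reduced implies ind-etale} to get ind-\'etaleness and hence $\mathbb{L}_{A/k}\simeq 0$ in $D(A)$, and then kill the higher Hochschild homology via the HKR filtration (the paper argues via completeness of the filtration where you use the Hodge splitting or, alternatively, commutation of $\mathrm{HH}$ with filtered colimits). One small indexing slip: $\mathrm{HH}_1(A/k)\cong\Omega_{A/k}$ on the nose --- the summand $H_1(\mathbb{L}_{A/k})$ you list contributes to $\mathrm{HH}_2$, not $\mathrm{HH}_1$ --- but this is harmless since you only use that $\Omega_{A/k}$ is a direct summand.
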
{}

In \Cref{HH}, we briefly review the definition of Hochschild homology and prove the above proposition. The proof is manifestly based on techniques from commutative algebra and we point out that the commutativity assumption in \Cref{HH_1} is very sharp. \Cref{exampleBen} shows that a similar assertion is false without the commutativity assumption; we learnt this example from Antieau.

\vspace{2mm}
\Cref{propb} also has some unexpected consequences. It can be used to deduce new special instances of a question of Weibel \cite[Question 2.9]{weibel} and Vorst's conjecture \cite{vorst} without any finiteness or noetherian assumptions. The applications in this direction were pointed out by Morrow after we shared a draft version of our paper with him; we thank him heartily for generously sharing his observations with us. These questions belong to the area of $K$-regularity, which roughly speaking, uses algebraic $K$-theory to study regularity of commutative rings. We briefly recall some necessary definitions and results in $K$-theory in \Cref{Kreg} and include the new applications. In what follows, \Cref{intro1} is related to the question of Weibel and \Cref{intro2} is a non-noetherian case of Vorst's conjecture.

\begin{proposition}[see \Cref{weib}]\label{intro1}
Let $A$ be a commutative $k$-algebra over a field $k$ of characteristic zero such that the module of $d+1$-forms $\wedge^{d+1}\Omega_{A/k}=0.$ Then 

\begin{enumerate}
    \item For  $n<-d$, the $K$-groups $K_n(A) = 0$.
    \item $A$ is $K_n$-regular (see \Cref{teach}) for all $n \le -d.$ 
\end{enumerate}

\end{proposition}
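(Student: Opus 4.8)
The plan is to reduce the statement to the noetherian case and then invoke the resolution of Weibel's conjecture. First I would use \Cref{propb} to extract a dimension bound: by the remark following \Cref{propb}, the hypothesis $\wedge^{d+1}\Omega_{A/k}=0$ forces the $k$-transcendence cardinality of $A$ to be at most $d$. Indeed, an injection $k[t_1,\ldots,t_{d+1}]\hookrightarrow A$ would, by \Cref{propb}, produce a nonzero element of $\wedge^{d+1}_A\Omega_{A/k}$, contradicting the hypothesis; hence any $d+1$ elements of $A$ are algebraically dependent over $k$.

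Next I would write $A=\operatorname{colim}_\alpha A_\alpha$ as the filtered colimit of its finitely generated $k$-subalgebras $A_\alpha$, ordered by inclusion. Each $A_\alpha$ is a finitely generated $k$-algebra, hence noetherian, and I claim $\dim A_\alpha\le d$. For a finitely generated $k$-algebra the Krull dimension is the maximum of $\operatorname{trdeg}_k(A_\alpha/\mathfrak p)$ over the (minimal) primes $\mathfrak p$; lifting a transcendence basis of $A_\alpha/\mathfrak p$ to $A_\alpha$ yields that many elements of $A_\alpha\subseteq A$ which remain algebraically independent over $k$, so by the transcendence cardinality bound this number is at most $d$. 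Thus every $A_\alpha$ is a noetherian ring of Krull dimension at most $d$.

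Now I would apply the Kerz--Strunk--Tamme theorem, i.e.\ the resolution of Weibel's conjecture \cite{weibel} for noetherian schemes: a noetherian ring $R$ with $\dim R\le d$ satisfies $K_n(R)=0$ for $n<-d$, and is $K_n$-regular for all $n\le -d$. For part (1), since nonconnective $K$-theory commutes with filtered colimits of rings, $K_n(A)=\operatorname{colim}_\alpha K_n(A_\alpha)=0$ for $n<-d$. For part (2), fix $n\le -d$ and $m\ge 1$; using $A[t_1,\ldots,t_m]=\operatorname{colim}_\alpha A_\alpha[t_1,\ldots,t_m]$ together with continuity of $K$-theory, the natural map $K_n(A)\to K_n(A[t_1,\ldots,t_m])$ is the filtered colimit of the maps $K_n(A_\alpha)\to K_n(A_\alpha[t_1,\ldots,t_m])$, each of which is an isomorphism by the $K_n$-regularity of $A_\alpha$. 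Hence $A$ is $K_n$-regular in the sense of \Cref{teach}.

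The main obstacle here is conceptual rather than computational. Everything hinges on converting the differential hypothesis $\wedge^{d+1}\Omega_{A/k}=0$ into the Krull dimension bound for the finitely generated subalgebras $A_\alpha$, which is exactly what \Cref{propb} supplies, and on the fact that negative $K$-theory commutes with filtered colimits, so that the noetherian input of Kerz--Strunk--Tamme propagates to the possibly non-noetherian ring $A$. I emphasize that no finiteness hypothesis on $A$ itself is invoked: the noetherian theorem is applied only to the subalgebras $A_\alpha$, and the passage to the limit is purely formal.
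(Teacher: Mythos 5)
Your proof is correct and follows essentially the same route as the paper: bound the $k$-transcendence cardinality by $d$ via \Cref{propb}, deduce that every finitely generated $k$-subalgebra is noetherian of Krull dimension at most $d$, apply a known resolution of Weibel's conjecture to those subalgebras, and pass to the filtered colimit using continuity of $K$-theory. The only difference is cosmetic: the paper cites the characteristic-zero finite-type result of Corti\~nas--Haesemeyer--Schlichting--Weibel rather than the general noetherian theorem of Kerz--Strunk--Tamme, but either suffices since the subalgebras are finite type over a characteristic-zero field.
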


\begin{proposition}[see \Cref{vorstcon}]\label{intro2}
Let $A$ be a commutative $k$-algebra over a field $k$ of characteristic zero such that the module of K\"ahler differentials $\Omega_{A/k}=0.$ If $A$ is $K_1$-regular, then $A$ is ind-\'etale.  Moreover, $A$ is $K_n$-regular for all integers $n.$
\end{proposition}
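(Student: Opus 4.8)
The plan is to deduce \Cref{intro2} from \Cref{t: formally etale implies ind-etale in the introduction} by observing that the hypothesis of $K_1$-regularity supplies exactly the reducedness that the Theorem requires. Thus I would split the argument into three steps: first, show that any $K_1$-regular commutative ring is reduced; second, invoke \Cref{t: formally etale implies ind-etale in the introduction} to conclude that $A$ is ind-\'etale; third, prove the ``moreover'' clause by exploiting the description of ind-\'etale algebras as filtered colimits of regular rings.

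For the first step, I would argue via the functor $\mathrm{NK}_1$. Recall that $K_1$-regularity of $A$ entails, in particular, that $K_1(A) \to K_1(A[t])$ is an isomorphism, i.e.\ $\mathrm{NK}_1(A) \coloneqq \ker\big(K_1(A[t]) \to K_1(A)\big) = 0$, the map being induced by $t \mapsto 0$. For a commutative ring $R$, the determinant provides a natural retraction $K_1(R) \to R^\times$ of the inclusion $R^\times \hookrightarrow K_1(R)$, so that $R^\times$ is a direct summand of $K_1(R)$. Applying this naturally to $R = A[t]$ and $R = A$, I obtain that $\ker\big(A[t]^\times \to A^\times\big)$ is a direct summand of $\mathrm{NK}_1(A)$. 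If $A$ were not reduced, I would pick a nonzero nilpotent $a \in A$; then $1 + at$ is a unit of $A[t]$ lying in the kernel of $A[t]^\times \to A^\times$ and distinct from $1$, forcing $\mathrm{NK}_1(A) \neq 0$ and contradicting $K_1$-regularity. Hence $A$ is reduced.

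Given reducedness, the second step is immediate: since $k$ has characteristic zero and $\Omega_{A/k} = 0$, \Cref{t: formally etale implies ind-etale in the introduction} yields that $A$ is ind-\'etale. For the final assertion, I would write $A = \varinjlim_i A_i$ as a filtered colimit of \'etale $k$-algebras. Each $A_i$, being \'etale over the field $k$, is a finite product of finite separable field extensions of $k$, hence a regular Noetherian ring, and regular Noetherian rings are $K_n$-regular for every $n \in \mathbb{Z}$. Since both algebraic $K$-theory and the formation of polynomial extensions commute with filtered colimits, $K_n(A[t_1, \dots, t_r]) = \varinjlim_i K_n(A_i[t_1, \dots, t_r]) = \varinjlim_i K_n(A_i) = K_n(A)$ for all $r$, so $A$ is $K_n$-regular for all $n$.

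The main obstacle --- and the conceptual heart of the statement --- is the first step: recognizing that $K_1$-regularity forces reducedness, thereby bridging the gap between the hypothesis $\Omega_{A/k} = 0$ (which alone does not imply reducedness, by Gabber's example) and the reducedness needed to apply \Cref{t: formally etale implies ind-etale in the introduction}. Once this is in place, the remaining steps are formal consequences of the structure theory of \'etale algebras over a field and the compatibility of $K$-theory with filtered colimits.
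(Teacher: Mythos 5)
Your proposal is correct and follows essentially the same route as the paper: both arguments use the determinant splitting $K_1(R)\cong R^{\times}\oplus \mathrm{SK}_1(R)$ together with the unit $1+at$ to show that $K_1$-regularity forces reducedness, then apply \Cref{formally unramified plus reduced implies ind-etale}, and finally deduce $K_n$-regularity for all $n$ from the compatibility of $K$-theory with filtered colimits and the $K_n$-regularity of \'etale (regular noetherian) algebras. The only cosmetic difference is that you phrase the reducedness step via vanishing of $\mathrm{NK}_1(A)$ and the kernel of $A[t]^{\times}\to A^{\times}$, whereas the paper argues via surjectivity of $A^{\times}\to A[t]^{\times}$; these are equivalent bookkeeping of the same idea.
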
{}

\subsection*{Acknowledgements} We are very grateful to Matthew Morrow for several helpful exchanges, pointing out the connections with $K$-theory, as well as for bringing Bhatt's question to our attention during the Arizona Winter School in 2018. We are also very thankful to Ben Antieau, Bhagav Bhatt, Mel Hochster and Karen Smith for helpful conversations.  The first named author thanks the support of NSF
grant DMS \#1801689, NSF FRG grant \#1952399 and the Rackham international student fellowship; the second named author thanks the support of NSF grants DMS \#2101075, \#1801697, NSF FRG grant \#1952399 and Rackham one term dissertation fellowship while working on this article.

\section{Main results}
Fix a field $k$. In this section, we first define and explore the notion of \textit{$k$-transcendence cardinality} of a $k$-algebra. Transcendence cardinality shows up later in \Cref{weib}. The main result of this section, \Cref{unramified implies integral}, provides a criteria for finite transcendence cardinality in terms of the vanishing of module of differential forms.

For the definition of \textit{cardinal numbers} appearing in the definition below refer to \cite[Chapter III]{Bourbakiset}. 
\begin{definition}\label{d:trans cardinality}
Let $A$ be a $k$-algebra. Given a cardinal number $J$, we say that $A$ has $k$-transcendence cardinality at least $J$, if there is a set $S$ of cardinality $J$ and a $k$-algebra injection $k[\{t_j \, | \, j \in S\}] \hookrightarrow A$. The \textit{$k$-transcendence cardinality} of $A$ is $\text{Sup}\{J\, \, | \,\, A \,\, \text{has} \,\, k-\text{transcendence cardinality at least} \,\, J\}$. Here the supremum is taken in the set of all cardinal numbers originating from subsets of $A$ and the supremum is also a cardinal number- see \Cref{r: cardinal number}.

When the $k$-transcendence cardinality of $A$ is a natural number, we call that natural number the \textit{$k$-transcendence degree} of $A$.\\
\end{definition}
\begin{remark}\label{r: cardinal number}
 The cardinal numbers of subsets of a given set form a well ordered set- see Theorem 1, \cite{cardinal}. Hence the supremum in \Cref{d:trans cardinality} is again a cardinal number.
\end{remark}
We establish some basic properties of the notion of transcendence cardinality listed below.
\begin{proposition}\label{p: transcendence degree}Let $A$ be a $k$-algebra, not necessarily noetherian.
\begin{enumerate}
    \item If $A$ is finite type over $k$, then the $k$-transcendence cardinality of $A$ is finite and is the same as the Krull dimension of $A$.
    \item  If $A$ has $k$-transcendence cardinality at most $n \in \mathbb{N}$, then every finite type $k$-subalgebra of $A$ has Krull dimension at most $n$. Moreover the Krull dimension of $A$ is at most $n$.
    \item Let $B$ be a $k$-algebra such that $B$ contains $A$ and is module finite over $A$. Then the transcendence cardinality of $B$ is finite if and only if the transcendence cardinality of $A$ is finite. When both the transcendence cardinalities are finite, those are the same.
    \item Let $ \phi: A \rightarrow C$ be a finite $k$-algebra homomorphism. If the $k$-transcendence cardinality of $A$ is $n \in \mathbb N$, then the $k$-transcendence cardinality of $C$ is at most $n$.
    \item The $k$-transcendence cardinalities of $A$ and $A_{\text{red}}$ are the same.
    \item Let $A$ be a domain with finite $k$-transcendence cardinality. The transcendence cardinality of the fraction field of $A$ is the same as that of $A$.
    \item If $k$ has characteristic zero and $\wedge^s \Omega_{A/k}=0$, then $A$ has $k$-transcendence cardinality at most $s-1$.
\end{enumerate}
\end{proposition}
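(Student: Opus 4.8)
The plan is to work throughout with the reformulation that giving a $k$-algebra injection $k[\{t_j : j \in S\}] \hookrightarrow A$ is the same datum as specifying a family $\{a_j\}_{j\in S}$ of elements of $A$ that is algebraically independent over $k$; thus the $k$-transcendence cardinality of $A$ is the supremum of the cardinalities of algebraically independent subsets of $A$. The single elementary lemma I would isolate first is \emph{monotonicity under quotients}: if $\pi\colon A \twoheadrightarrow A'$ is surjective then $\mathrm{trc}(A') \le \mathrm{trc}(A)$, because any lift to $A$ of an algebraically independent family in $A'$ stays algebraically independent (a polynomial relation upstairs pushes down to a relation downstairs). This immediately gives the easy direction of part (5), since $A_{\mathrm{red}}$ is a quotient of $A$; the reverse inequality for (5) follows from a nilpotent trick: if $f(\bar a_1,\dots,\bar a_m)=0$ in $A_{\mathrm{red}}$ then $f(a_1,\dots,a_m)$ is nilpotent, so some power $f^N$ yields a genuine relation $(f^N)(a_1,\dots,a_m)=0$ in $A$, forcing $f=0$.

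For part (1) I would invoke dimension theory for finitely generated $k$-algebras. To see $\mathrm{trc}(A)\ge \dim A$, choose a minimal prime $\mathfrak p$ with $\dim A/\mathfrak p=\dim A$; a transcendence basis of $\mathrm{Frac}(A/\mathfrak p)$ can be extracted from the finitely many generators of $A/\mathfrak p$, and lifting it to $A$ keeps it independent. For $\mathrm{trc}(A)\le \dim A$, given an injection $k[t_1,\dots,t_m]\hookrightarrow A$ I would use that the corresponding morphism of spectra is dominant (localize at $k[t_1,\dots,t_m]\setminus\{0\}$) to find a prime $\mathfrak p$ of $A$ contracting to $(0)$; then $A/\mathfrak p$ is a finite type domain containing $k[t_1,\dots,t_m]$, so $\dim A \ge \dim A/\mathfrak p = \mathrm{trdeg}_k\mathrm{Frac}(A/\mathfrak p) \ge m$. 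Part (2) is then formal: any finite type subalgebra $B\subseteq A$ satisfies $\mathrm{trc}(B)\le \mathrm{trc}(A)\le n$, whence $\dim B = \mathrm{trc}(B)\le n$ by (1); and $\dim A = \sup_B \dim B$ because a finite chain of primes $\mathfrak p_0\subsetneq\cdots\subsetneq\mathfrak p_r$ in $A$ stays strict after contracting to the finite type subalgebra generated by witnesses $x_i\in\mathfrak p_i\setminus\mathfrak p_{i-1}$.

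The crux is part (3), where I expect the main obstacle to lie: upgrading the finite type facts to a module-finite extension $A\subseteq B$ with no noetherian hypothesis. The inequality $\mathrm{trc}(B)\ge\mathrm{trc}(A)$ is immediate from $A\hookrightarrow B$. For the reverse, assuming $\mathrm{trc}(A)=n$ and given algebraically independent $c_1,\dots,c_{n+1}\in B$, I would apply Noetherian approximation: each $c_j$ is integral over $A$, so I collect the finitely many coefficients of their integral dependence relations into a finite type subalgebra $A_0\subseteq A$ and set $B_0=A_0[c_1,\dots,c_{n+1}]$. Then $B_0$ is module finite over $A_0$ and of finite type over $k$, with $\dim A_0\le n$ by (2), while $\dim B_0\ge n+1$ by the $\mathrm{trc}\le\dim$ argument of (1); this contradicts the equality of Krull dimensions under the integral extension $A_0\hookrightarrow B_0$. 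Part (4) then follows by factoring $\phi$ through its image $A'=\phi(A)$, a quotient of $A$ (so $\mathrm{trc}(A')\le n$ by monotonicity) over which $C$ is module finite, and invoking (3). For part (6) I would use field theory: for a domain $A$ with fraction field $K$, a transcendence basis of $K/k$ can be drawn from the generating set $A$, so the maximal size of an algebraically independent family in $A$ equals $\mathrm{trdeg}_k K=\mathrm{trc}(K)$. Finally, part (7) is a direct translation of \Cref{propb} (that is, \Cref{unramified implies integral}): were there an injection $k[t_1,\dots,t_s]\hookrightarrow A$, the element $d\iota(t_1)\wedge\cdots\wedge d\iota(t_s)$ would be nonzero in $\wedge^s_A\Omega_{A/k}$, contradicting $\wedge^s\Omega_{A/k}=0$; hence $\mathrm{trc}(A)\le s-1$.
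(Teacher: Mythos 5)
Your proposal is correct and follows essentially the same route as the paper: the same localization-at-the-image trick (a prime contracting to $(0)$) for the upper bound in (1), the same Noetherian approximation via integral-dependence coefficients for (3), the same quotient-monotonicity and lifting arguments for (4)--(6), and the same appeal to \Cref{unramified implies integral} for (7). The only cosmetic difference is that for the lower bound in (1) you extract a transcendence basis from the generators modulo a minimal prime of maximal dimension, where the paper invokes Noether normalization.
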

\begin{remark}
The Krull dimension can be much lower than the $k$-transcendence cardinality. For example, a field extension $L$ of $k$ can have arbitrarily large $k$-transcendence cardinality, while the Krull dimension of $L$ is zero.
\end{remark}
\begin{proof}[Proof of \Cref{p: transcendence degree}] (1) Let $d$ be the Krull dimension of $A$. Noether normalization (see \cite[00OY]{Stacksproject}) guarantees a module finite inclusion $k[x_1, \ldots, x_d] \hookrightarrow A$. So the $k$-transcendence cardinality of $A$ is at least $d$. We show that the $k$-transcendence cardinality of $A$ is at most $d$. For that, we use the next lemma to reduce the problem to the case where $A$ is a domain.
\begin{lemma}\label{a multiplicative set avoids a minimal prime}
Let $S$ be a multiplicative set in a commutative ring $R$, such that $0 \notin S$. There is a minimal prime $\mathfrak{p}$ of $R$ such that $S \subseteq R-\mathfrak{p}$.
\end{lemma}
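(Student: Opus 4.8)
The plan is to produce a prime of $R$ that is disjoint from $S$ and as small as possible; the crux will be that a prime which is minimal \emph{among those disjoint from $S$} is automatically a minimal prime of $R$. Let $\Sigma$ denote the set of prime ideals $\mathfrak{q} \subseteq R$ with $\mathfrak{q} \cap S = \emptyset$, partially ordered by reverse inclusion. First I would check that $\Sigma \neq \emptyset$: since $0 \notin S$, the zero ideal $(0)$ is disjoint from $S$, and by the standard fact that an ideal disjoint from a multiplicative set is contained in a prime disjoint from that set (equivalently, that $S^{-1}R$ is a nonzero ring and hence has a prime), there is at least one prime disjoint from $S$.

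Next I would verify the hypotheses of Zorn's lemma for $\Sigma$ under reverse inclusion. Given a chain $\{\mathfrak{q}_i\}_{i \in I}$ in $\Sigma$ — a family of primes disjoint from $S$, totally ordered by inclusion — its intersection $\mathfrak{q} = \bigcap_{i} \mathfrak{q}_i$ is again a prime ideal (the intersection of a totally ordered family of primes is prime), and it is disjoint from $S$ because each $\mathfrak{q}_i$ is. Thus $\mathfrak{q} \in \Sigma$ is an upper bound for the chain in the reverse-inclusion order. Zorn's lemma then yields a maximal element $\mathfrak{p}$ of $\Sigma$ under reverse inclusion, that is, a prime disjoint from $S$ which is minimal with respect to inclusion among all primes disjoint from $S$.

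Finally I would argue that this $\mathfrak{p}$ is a minimal prime of $R$. If $\mathfrak{p}' \subseteq \mathfrak{p}$ is any prime of $R$, then $\mathfrak{p}' \cap S \subseteq \mathfrak{p} \cap S = \emptyset$, so $\mathfrak{p}' \in \Sigma$; minimality of $\mathfrak{p}$ in $\Sigma$ forces $\mathfrak{p}' = \mathfrak{p}$. Hence no prime lies strictly below $\mathfrak{p}$, so $\mathfrak{p}$ is minimal in $R$ and satisfies $S \subseteq R - \mathfrak{p}$, as desired. Essentially the same argument can be packaged through the localization $S^{-1}R$, which is nonzero since $0 \notin S$: it has a minimal prime, and minimal primes of $S^{-1}R$ correspond exactly to primes of $R$ minimal among those disjoint from $S$.

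The only genuinely delicate point is the last observation, that ``minimal among disjoint primes'' coincides with ``minimal in $R$''; the nonemptiness of $\Sigma$ and the stability of primeness and of disjointness under intersections of chains are routine. I expect no serious obstacle, the main care being to set up the reverse-inclusion order correctly so that Zorn produces a minimal rather than a maximal prime.
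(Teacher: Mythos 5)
Your argument is correct, including the one genuinely delicate step: a prime $\mathfrak{p}$ minimal among primes disjoint from $S$ is automatically a minimal prime of $R$, because any prime contained in $\mathfrak{p}$ is \emph{a fortiori} disjoint from $S$. The paper's own proof is the two-line version you sketch at the very end: since $0 \notin S$, the localization $S^{-1}R$ is nonzero, hence has a minimal prime, and one takes $\mathfrak{p}$ to be its contraction along $R \to S^{-1}R$. That version delegates all the set-theoretic work to two standard facts (every nonzero ring has a minimal prime, and $\operatorname{Spec}(S^{-1}R)$ is order-isomorphic to the primes of $R$ disjoint from $S$), so the subtlety you isolate is absorbed into the order-isomorphism. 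Your direct Zorn's lemma argument buys self-containedness and makes explicit exactly where each hypothesis is used ($0 \notin S$ for nonemptiness of $\Sigma$, multiplicativity of $S$ for the existence of a disjoint prime, chains of primes intersecting to primes for the Zorn step); the paper's version buys brevity. Both are complete proofs.
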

\begin{proof}
Since $0 \notin S$, the localization $S^{-1}R$ is nonzero. So $S^{-1}R$ has a minimal prime $\mathfrak{q}$. We can take $\mathfrak{p}$ to be the contraction of $\mathfrak{q}$ via the natural map $R \rightarrow S^{-1}R$.
\end{proof}
Now given a $k$-algebra inclusion $\phi: k[x_1, \ldots, x_n] \hookrightarrow A$, take $S= \phi(k[x_1, \ldots, x_n] \setminus 0)$. Using \Cref{a multiplicative set avoids a minimal prime}, choose a minimal prime $\mathfrak{p}$ of $A$ such that $S \cap \mathfrak{p}= \emptyset$. This means the composition of $\phi$ with the quotient $A \rightarrow \frac{A}{\mathfrak p}$ is also injective. The last injection gives an injection of the fraction fields $k(x_1, \ldots, x_n) \hookrightarrow \text{Frac}(\frac{A}{\mathfrak{p}})$. So $n$ is at most the $k$-transcendence degree of $\text{Frac}(\frac{A}{\mathfrak{p}})$. Since the $k$-transcendence degree of $\text{Frac}(\frac{A}{\mathfrak{p}})$ is the Krull dimension of $\frac{A}{\mathfrak{p}}$ and the Krull dimension of $\frac{A}{\mathfrak{p}}$ is at most $d$ (\cite[Prop. 14]{Serre}), $n \leq d$.\\\\
(2) The Krull dimension of any finite type $k$-subalgebra $B$ of $A$ is at most the $k$-transcendence degree of $B$. Since the $k$-transcendence degree of $B$ is at most $n$, we are done.\\

We now prove that the Krull dimension of $A$ is at most $n$.  Let $\mathfrak{p_0} \subseteq \mathfrak{p_1} \subseteq \ldots \subseteq \mathfrak{p_m}$ be chain of prime ideals of $A$- where each containment is strict. For each $j \geq 1$, choose $x_j \in \mathfrak{p_j} \setminus \mathfrak{p_{j-1}}.$ Let $B$ be the $k$-subalgebra of $A$ generated by $x_1, \ldots, x_m$. So we have a chain of prime ideals in $B$ with strict containments, 
$$\mathfrak{p_0} \cap B \subsetneq  \mathfrak{p_1} \cap B  \subsetneq \ldots \subsetneq \mathfrak{p_m} \cap B.$$
So $m$ is at most the Krull dimension of $B$. Since the Krull dimension of $B$ is at most $n$ by part $1$, the Krull dimension of $A$ is at most $n$.\\\\
(3) We show that if the $k$-transcendence cardinality of $A$ is $n \in \mathbb N$, then the $k$-transcendence cardinality of $B$ is also $n$. Given any $k$-algebra inclusion $\phi: k[x_1, \ldots, x_m] \hookrightarrow B$, we can choose finite type $k$-subalgebras $B' \subseteq B$ and $A' \subseteq A$, such that $\text{Im}(\phi) \subseteq B'$, $A' \subseteq B'$ and $A' \hookrightarrow B'$ is module finite. The choice can be made as follows: for each $j$, $1 \leq j \leq m$, there is a nonzero monic polynomial $F_j \in A[t]$ such that $F_j(\phi(x_j))=0$. Take $A'$ to be the $k$-subalgebra of $A$ generated by the coefficients of $F_j$'s where $j$ varies. Take $B'$ to be $A'$-subalgebra of $B$ generated by all the $\phi(x_j)$'s.

Now, by (1), the $k$-transcendence degree of $B'$ is the Krull dimension of $B'$. Since $A' \subseteq B'$ is module finite, the Krull dimension of $A'$ and $B'$ are the same. By (2), the Krull dimension of $A'$ is at most $n$. So the $k$-transcendence degree of $B'$ is at most $n$. Thus $m \leq n$, proving that the $k$-transcendence cardinality of $B$ is also at most $n$. Again since $A \subseteq B$, the $k$-transcendence cardinality of $B$ is at least $n$.

If the $k$-transcendence cardinality of $B$ is finite, the $k$- transcendence cardinality of $A$ is also finite as $A \subseteq B$. Moreover the $k$-transcendence cardinalities of $A$ and $B$ coincide by the argument above.\\\\
(4) Since there is a $k$-algebra surjection $A \rightarrow \phi(A)$, the $k$-transcendence cardinality of $\phi(A)$ is at most $n$. Since $\phi(A) \subseteq C$ is module finite, by (3), the $k$-transcendence cardinality of $C$ is at most that of $\phi(A)$ and the later is at most $n$.\\\\
(5) Given a set $S$ of cardinality $J$ and a $k$-algebra injection $\phi: k[\{t_j \, | \, j \in S\}] \hookrightarrow A$, the intersection of the image of $\phi$ and the nilradical of $A$ is zero. Thus composing $\phi$ with the surjection $A \rightarrow A_{\text{red}}$ also gives an injection. Thus the transcendence cardinality of $A_{\text{red}}$ is at least that of $A$. Given a set $S$ and a $k$-algebra injection $\psi: k[\{x_j \, | \, j \in S\}] \rightarrow A_{\text{red}}$, lift $\psi$ to a $k$-algebra map $k[\{x_j \, | \, j \in S\}] \rightarrow A$; the lift is necessarily injective. So the transcendence cardinality of $A_{\text{red}}$ is at most that of $A$.\\\\
(6) Suppose that the transcendence cardinality of $A$ is $n \in \mathbb{N}$. It is enough to show that the transcendence cardinality of $\text{Frac}(A)$ is at most $n$. By contradiction, assume that $\text{Frac}(A)$ contains elements $\frac{a_1}{b_1}, \frac{a_2}{b_2}, \ldots, \frac{a_{n+1}}{b_{n+1}}$, which are algebraically independent over $k$, where all $a_i, b_i$'s are in $A$. Then the subalgebra $k[a_1, \ldots, a_{n+1}, b_1, \ldots, b_{n+1}] \subseteq A$ has transcendence degree at least ${n+1}$ as its fraction field contains $k[\frac{a_1}{b_1}, \frac{a_2}{b_2}, \ldots, \frac{a_{n+1}}{b_{n+1}}]$. Since the transcendence degree of $A$ is $n$, we get a contradiction. \\\\
(7) This assertion follows from \Cref{unramified implies integral} proven below.

\end{proof}
\begin{proposition}\label{unramified implies integral}
Let $k$ be a field of characteristic zero and $A$ be a $k$-algebra -- not assumed to be noetherian. Suppose that there is a $k$-algebra injection from the polynomial ring, $\iota: k[t_1, \ldots, t_s] \hookrightarrow A$ for some $s \geq 1$. Then $d\iota(t_1) \wedge d\iota(t_2) \wedge \ldots \wedge d\iota(t_s)$ is a nonzero element of $\wedge^s_A \Omega_{A/k}$.
\end{proposition}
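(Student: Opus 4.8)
The plan is to reduce the statement to a computation of K\"ahler differentials of a field extension of $k$ in characteristic zero, where the wedge is easily seen to be nonzero because such extensions are automatically separable. Throughout, write $\omega := d\iota(t_1) \wedge \cdots \wedge d\iota(t_s) \in \wedge^s_A \Omega_{A/k}$ for the element in question. The guiding principle is that since $A$ carries no finiteness hypotheses, I want to transport the problem into a field, where noetherianness is irrelevant.

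First I would reduce to the case where $A$ is a domain. Consider the multiplicative set $S := \iota(k[t_1, \ldots, t_s] \setminus \{0\})$; since $\iota$ is injective, $0 \notin S$, so \Cref{a multiplicative set avoids a minimal prime} produces a minimal prime $\mathfrak{p}$ of $A$ with $S \cap \mathfrak{p} = \emptyset$. Then no nonzero element of $k[t_1, \ldots, t_s]$ maps into $\mathfrak{p}$, so the composite $k[t_1, \ldots, t_s] \to A \to A/\mathfrak{p}$ is again injective, and $A/\mathfrak{p}$ is a domain. The quotient map induces a natural homomorphism $\wedge^s_A \Omega_{A/k} \to \wedge^s_{A/\mathfrak{p}} \Omega_{(A/\mathfrak{p})/k}$ sending $\omega$ to the corresponding wedge of differentials for $A/\mathfrak{p}$; hence it is enough to prove nonvanishing for the domain $A/\mathfrak{p}$, and I may replace $A$ by this quotient.

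Next I would pass to the fraction field $K := \mathrm{Frac}(A)$. Since the formation of $\Omega$ commutes with localization, the canonical map $\wedge^s_A \Omega_{A/k} \to \wedge^s_K \Omega_{K/k}$ carries $\omega$ to $dt_1 \wedge \cdots \wedge dt_s$, where I abusively write $t_i$ for the image of $\iota(t_i)$ in $K$. It therefore suffices to show this top wedge is nonzero in $\wedge^s_K \Omega_{K/k}$. The elements $t_1, \ldots, t_s$ are algebraically independent over $k$, so setting $F := k(t_1, \ldots, t_s) \subseteq K$, the module $\Omega_{F/k}$ is an $s$-dimensional $F$-vector space with basis $dt_1, \ldots, dt_s$. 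Because $\mathrm{char}\,k = 0$, the extension $K/F$ is separable, so the first fundamental exact sequence is left exact (a standard fact for separable field extensions):
\[
0 \to \Omega_{F/k} \otimes_F K \to \Omega_{K/k} \to \Omega_{K/F} \to 0.
\]
Consequently $dt_1, \ldots, dt_s$ remain $K$-linearly independent in $\Omega_{K/k}$, and hence $dt_1 \wedge \cdots \wedge dt_s \neq 0$, as desired. Equivalently, one may extend the partial derivations $\partial/\partial t_i$ of $F$ to $k$-derivations $D_i$ of $K$ (possible by separability); then $D_i(t_j) = \delta_{ij}$ forces the $dt_j$ to be independent via the pairing with $\mathrm{Der}_k(K,K) = \mathrm{Hom}_K(\Omega_{K/k}, K)$.

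The conceptual heart of the argument is this reduction strategy rather than any single estimate: the main obstacle is to funnel a completely general, possibly enormous $A$ into a setting one can control, and passing to a domain via a minimal prime and then to its fraction field accomplishes exactly that, after which only standard field theory remains. The only place characteristic zero genuinely enters is the left-exactness of the fundamental sequence, i.e.\ the separability of $K/F$; this fails in characteristic $p$, as the example $\iota(t) = s^p$ (giving $dt = p\,s^{p-1}\,ds = 0$) shows, consistent with the necessity of the hypothesis.
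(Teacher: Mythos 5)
Your proposal is correct and follows essentially the same route as the paper: both reduce to a field by choosing a minimal prime $\mathfrak{p}$ avoiding the multiplicative set $\iota(k[t_1,\ldots,t_s]\setminus\{0\})$ (the paper localizes the reduced quotient at that prime, you pass to $\mathrm{Frac}(A/\mathfrak{p})$ --- the same field), and then both exploit separability of field extensions in characteristic zero. The only cosmetic difference is in the field case: the paper extends $\{\iota(t_i)\}$ to a full transcendence basis and computes $\Omega_{K/k}$ as a free module via a direct limit over finite separable subextensions, whereas you invoke left-exactness of the first fundamental sequence for the separable extension $K/k(t_1,\ldots,t_s)$; both yield $K$-linear independence of the $dt_i$ and hence nonvanishing of the wedge.
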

\begin{proof}
We first prove \Cref{unramified implies integral} assuming that $A$ is a field and then deduce the general case from the field case in a few steps.\\

Assume that $A$ is a field. Pick a subset $\{x_i \}_{i \in I}$ of $A$ such that $\{\iota(t_1), \ldots, \iota(t_s)\} \cup \{x_i \}_{i \in I}$ is a $k$-transcendence basis of $A$; for example, $\{x_i \}_{i \in I}$ can be chosen to be a $k(\iota(t_1), \ldots, \iota(t_s))$-transcendence basis of $A$; see \cite[Tag 030F]{Stacksproject}. Set $L$ to be smallest subfield of $A$ containing $k$ and  $\{\iota(t_1), \ldots, \iota(t_s)\} \cup \{x_i \}_{i \in I}$. For any finite field extension $L' \supseteq L$ where $L' \subseteq A$, since $L \subseteq L'$ is separable, we have an isomorphism,
\begin{equation}\label{isomorphism for the intermediate field}
    \Omega_{L/k}\otimes_L L' \cong \Omega_{L'/k} \ ;
\end{equation}
see \cite[Chapter 6, Lemma 1.13]{QingLiu}. Varying $L'$ over finite extensions of $L$ such that $L' \subseteq A$, we get a direct system of isomorphisms from \Cref{isomorphism for the intermediate field}; taking the direct limit of this direct system of isomorphisms we get an isomorphism
\begin{equation}\label{isomorphism of Kahler differentials}
\Omega_{L/k}\otimes_L A \cong \Omega_{A/k}.
\end{equation}
To get \Cref{isomorphism of Kahler differentials}, we have used that formation of modules of K\"ahler differentials commute with taking direct limit (see \cite[Tag 00RM]{Stacksproject}) and $A$ is the direct limit of the fields $L'$. Since $\Omega_{L/k}$ is isomorphic to the free $L$-module with basis $\{d\iota(t_1), \ldots, d\iota(t_s)\} \cup \{dx_i\}_{i \in I}$, \Cref{isomorphism of Kahler differentials} implies that $\Omega_{A/k}$ is a free $A$-module with basis $\{d\iota(t_1), \ldots, d\iota(t_s)\} \cup \{dx_i\}_{i \in I}$. Hence $d\iota(t_1) \wedge d\iota(t_2) \wedge \ldots \wedge d\iota(t_s)$ is a nonzero element of $\wedge^s_A \Omega_{A/k}$.\\

Given a $k$-algebra $A$ as in \Cref{unramified implies integral}, which is not necessarily a field, set $A'$ to be $A$ modulo the nilradical of $A$. Then the composition $k[t_1, \ldots, t_s] \xhookrightarrow{\iota} A \rightarrow A'$ is also injective; denote the composition by $\phi$. Set $S= \phi(k[t_1, \ldots, t_s] \setminus {0})$. We note that $S$ is a multiplicative set. Using \Cref{a multiplicative set avoids a minimal prime} we can choose a minimal prime $\mathfrak p$ of $A'$ such that $S \subseteq A'- \mathfrak{p}$. So the image of any nonzero element of $k[t_1, \ldots, t_s]$ under the composition $k[t_1, \ldots, t_s] \xrightarrow{\phi}A' \rightarrow A'_{\mathfrak{p}}$ is a unit; hence the composition is also injective. Denote the last composition by $\psi$. We have a commutative diagram,\\
\begin{equation}\label{commutative diagram}
\begin{tikzcd}
{\wedge^s_{k[t_1, \ldots, t_s]}\Omega_{k[t_1, \ldots, t_s]/k}} \arrow[rd, "\wedge^s d\psi"'] \arrow[rr, "\wedge^s d\iota"] &                                                         & \wedge^s_A\Omega_{A/k} \arrow[ld] \\
                                                                                                                        & \wedge^s_{A'_{\mathfrak{p}}}\Omega_{A'_{\mathfrak{p}}/k} &                                  
\end{tikzcd}
\end{equation}
where the unlabelled downward arrow is induced by the canonical map $A \rightarrow A'_{\mathfrak{p}}$. We want to show that $\wedge^s d\iota (dt_1 \wedge\ldots\wedge dt_s)$ is nonzero. To that end, first note that $A'_{\mathfrak{p}}$ is a field: since $\mathfrak{p}$ is minimal, the only prime ideal of $A'_{\mathfrak{p}}$ namely $\mathfrak{p}A'_{\mathfrak{p}}$ coincides with the nilradical of $A'_{\mathfrak{p}}$, which is zero as $A'$ and hence $A'_{\mathfrak{p}}$ is reduced. Now by the field case of \Cref{unramified implies integral}, $\wedge^s d\psi(dt_1\wedge\ldots \wedge dt_s)$ is nonzero. The commutativity of diagram \ref{commutative diagram} implies that $\wedge^s d\psi(dt_1\wedge\ldots \wedge dt_s)$ is the image of $\wedge^s d\iota (dt_1 \wedge\ldots\wedge dt_s)$. So $\wedge^sd\iota (dt_1 \wedge\ldots\wedge dt_s)$ must be a nonzero element of $\wedge^s_A\Omega_{A/k}$.  
\end{proof}
As an immediate corollary we get,
\begin{corollary}\label{vanishing Kahler differential bounds transcendence degree}
Let $k$ be a characteristic zero field and $A$ be a $k$-algebra -- not necessarily noetherian. If $\wedge^s_A \Omega_{A/k}=0$, then there cannot be a $k$-algebra injection $k[t_1, \ldots, t_s] \rightarrow A$.
\end{corollary}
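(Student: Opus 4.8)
The plan is to argue by contraposition, since the statement is precisely the logical contrapositive of \Cref{unramified implies integral}. First I would suppose, toward a contradiction, that there does exist a $k$-algebra injection $\iota \colon k[t_1, \ldots, t_s] \hookrightarrow A$. Applying \Cref{unramified implies integral} verbatim to this injection, I obtain that the wedge $d\iota(t_1) \wedge d\iota(t_2) \wedge \cdots \wedge d\iota(t_s)$ is a \emph{nonzero} element of $\wedge^s_A \Omega_{A/k}$.

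This immediately contradicts the hypothesis $\wedge^s_A \Omega_{A/k} = 0$, which asserts that the module has no nonzero elements whatsoever. Hence no such injection can exist, which is exactly the assertion of the corollary.

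There is essentially no obstacle at this stage: all of the genuine work --- the passage to $A_{\mathrm{red}}$, the localization at a minimal prime $\mathfrak{p}$ chosen by \Cref{a multiplicative set avoids a minimal prime} to avoid the image of the polynomial ring (so that $A'_{\mathfrak{p}}$ becomes a field), and the transcendence-basis computation showing $\Omega_{L/k}$ is free with the $d\iota(t_i)$ among its basis elements --- has already been carried out inside the proof of \Cref{unramified implies integral}. The corollary is merely the repackaging of that proposition into a nonexistence statement, which is precisely the form needed to read off the bound on transcendence cardinality in part (7) of \Cref{p: transcendence degree}.
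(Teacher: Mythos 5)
Your proposal is correct and coincides with the paper's reasoning: the corollary is stated there as an immediate consequence of \Cref{unramified implies integral}, with exactly the contrapositive reading you give (an injection would produce the nonzero element $d\iota(t_1)\wedge\cdots\wedge d\iota(t_s)$ in the module assumed to vanish). Nothing further is needed.
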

\begin{remark}
\noindent
\begin{enumerate}

   \item The converse to \Cref{vanishing Kahler differential bounds transcendence degree} is false as the following example shows. For any characteristic zero field $k$, take $A= k[x]/(x^2)$. Then $\Omega_{A/k} \cong \frac{A}{xA}dx$, yet there cannot be any injection from $k[t]$ to $A$ as $A$ has Krull dimension zero.

    \item \Cref{unramified implies integral} need not hold when $k$ has positive characteristic. For example, take $\iota$ to be the inclusion $k[x] \rightarrow k[x^{1/p}]$. Then $d(i(x))=0$. 
\end{enumerate}

\end{remark}
\begin{corollary}\label{unramified implies integral corollary}
Let $k$ be a field of characteristic zero, $A$ be a $k$-algebra -- not necessarily noetherian. If $\Omega_{A/k}=0$, then $A$ is integral over $k$.
\end{corollary}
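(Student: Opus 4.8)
The plan is to deduce this as the $s=1$ case of \Cref{vanishing Kahler differential bounds transcendence degree}, which already contains all the real content. Since $\Omega_{A/k}=0$ by hypothesis, in particular $\wedge^1_A \Omega_{A/k} = \Omega_{A/k} = 0$, so that corollary tells us there is no $k$-algebra injection $k[t] \hookrightarrow A$. The remaining task is purely formal: to translate the nonexistence of such an injection into algebraicity of every element of $A$ over $k$.

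Concretely, I would fix an arbitrary element $a \in A$ and consider the $k$-algebra homomorphism $\mathrm{ev}_a \colon k[t] \to A$ determined by $t \mapsto a$. If $\mathrm{ev}_a$ were injective, it would be exactly the kind of injection ruled out above, a contradiction. Hence $\ker(\mathrm{ev}_a) \neq 0$, i.e.\ there exists a nonzero polynomial $f \in k[t]$ with $f(a)=0$, so $a$ is algebraic over $k$. Since $k$ is a field, dividing $f$ by its leading coefficient yields a \emph{monic} polynomial over $k$ annihilating $a$, which exhibits $a$ as integral over $k$. As $a$ was arbitrary, $A$ is integral over $k$.

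I do not anticipate any genuine obstacle here. All of the difficulty is packaged into \Cref{unramified implies integral} and its corollary; the present statement is their specialization to $s=1$, combined with the elementary observation that a transcendental element would produce precisely a polynomial subalgebra $k[t]\hookrightarrow A$. The only point I would state with care is that ``algebraic over $k$'' and ``integral over $k$'' coincide here, which holds because $k$ is a field and so leading coefficients are invertible.
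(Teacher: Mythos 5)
Your proposal is correct and matches the paper's argument: both reduce to the $s=1$ case of \Cref{unramified implies integral} (equivalently \Cref{vanishing Kahler differential bounds transcendence degree}) by observing that a transcendental element $a$ would give an injection $k[t]\hookrightarrow A$ sending $t\mapsto a$, forcing $da\neq 0$ in $\Omega_{A/k}$. Your explicit remark that algebraic over a field coincides with integral is a fine (and implicitly used) finishing touch.
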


\begin{proof}
Contrary to the assertion of \Cref{unramified implies integral corollary}, assume that for $a \in A$ the $k$-algebra map from the polynomial ring $k[t]$ to $A$ sending $t$ to $a$ is injective. Now \Cref{unramified implies integral} implies that $da \in \Omega_{A/k}$ is nonzero, contradicting our hypothesis $\Omega_{A/k}=0$.
\end{proof}
The next results partially answers Bhatt's question (\Cref{main question}).

\begin{theorem}
\label{formally unramified plus reduced implies ind-etale}
Let $k$ be a field of characteristic zero and $A$ is a reduced $k$-algebra. If $\Omega_{A/k}=0$, then $A$ is ind-\'etale.
\end{theorem}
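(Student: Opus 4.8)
The plan is to prove that a reduced $k$-algebra $A$ with $\Omega_{A/k}=0$ is ind-\'etale by first pinning down its structure using the transcendence results already established, then exhibiting $A$ as a filtered colimit of \'etale $k$-algebras. By \Cref{unramified implies integral corollary}, the hypothesis $\Omega_{A/k}=0$ forces $A$ to be integral over $k$. Combined with reducedness, this strongly constrains $A$: every element is a root of a separable polynomial over $k$ (separability coming from characteristic zero, where integral plus reduced rules out inseparability). I expect $A$ to decompose, at least locally, into a product or colimit of algebraic field extensions of $k$.

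\smallskip

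The key steps, in order, are as follows. First, I would reduce to understanding finitely generated $k$-subalgebras $B \subseteq A$. Since $A$ is the filtered colimit of its finite type $k$-subalgebras, and ind-\'etaleness is about expressing $A$ as such a colimit, it suffices to show each such $B$ sits inside an \'etale $k$-subalgebra of $A$, or that the relevant $B$ can be chosen \'etale and cofinal. Second, for a finite type $k$-subalgebra $B$, integrality over $k$ means $B$ has Krull dimension zero; being a domain-free (reduced) Artinian-type object, $B$ is a finite product of finite field extensions of $k$. Here \Cref{p: transcendence degree}(1) is the tool: finite type plus $k$-transcendence degree zero (which follows from integrality via \Cref{vanishing Kahler differential bounds transcendence degree}, since no injection $k[t]\hookrightarrow B$ exists) gives Krull dimension zero. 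Third, reducedness upgrades each such $B$ to a finite product of separable (because char $0$) field extensions of $k$, hence \'etale over $k$. Fourth, I would verify that the filtered system of these finite type reduced subalgebras is cofinal and that each transition map lands in an \'etale subalgebra, so the colimit presents $A$ as ind-\'etale.

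\smallskip

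The main obstacle I anticipate is the third-to-fourth step transition: ensuring that a finite type reduced $k$-subalgebra $B$ is not merely zero-dimensional but genuinely \'etale, i.e.\ that $\Omega_{B/k}=0$ rather than just $\Omega_{A/k}=0$. The vanishing of $\Omega_{A/k}$ does not immediately descend to a subalgebra $B$, since differentials need not behave well under passage to subrings. The clean way around this is to use that in characteristic zero a finite type reduced zero-dimensional $k$-algebra is automatically a finite product of finite separable field extensions (every finite field extension of a characteristic-zero field is separable), and such products are exactly the \'etale $k$-algebras. Thus I would sidestep any differential computation on $B$ and instead argue purely ring-theoretically that $B_{\mathrm{red}}=B$ is \'etale, then observe that $A=\varinjlim B$ over finite type reduced subalgebras exhibits the required ind-\'etale structure. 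I expect the verification that this colimit is filtered and that each $B$ embeds \'etale-ly to be routine once the structure of $B$ is identified.
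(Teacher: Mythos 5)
Your proposal is correct and follows essentially the same route as the paper: reduce to finitely generated $k$-subalgebras $B$, use \Cref{unramified implies integral corollary} to get integrality over $k$ and hence Krull dimension zero, and then use reducedness plus characteristic zero to identify $B$ as a finite product of finite separable field extensions of $k$, which is \'etale. The worry you raise about $\Omega_{B/k}$ not descending from $\Omega_{A/k}$ is resolved exactly as the paper does it, by arguing ring-theoretically that $B\cong\prod_i B/\mathfrak{m}_i$ via the Chinese remainder theorem and observing each factor is finite separable over $k$.
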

\begin{proof}
We shall show that any finitely generated $k$-subalgebra of $A$ is \'etale over $k$; this will prove \Cref{formally unramified plus reduced implies ind-etale} since $A$ is the directed union of all finitely generated $k$-subalgebras.\\
Fix a finitely generated $k$-subalgebra $B$ of $A$. The ring $B$ is integral over $k$ as $A$ is integral over $k$ by \Cref{unramified implies integral corollary}. Therefore $B$ has Krull dimension zero. Hence every minimal prime of $B$ is maximal. Since $B$ is noetherian, $B$ has only finitely many minimal primes and hence $B$ has only finitely many maximal ideals -- say $\mathfrak{m}_1, \ldots, \mathfrak{m}_r$. By the Chinese remainder theorem, we have

\begin{equation}\label{chinese remainder theorem}
\frac{B}{\cap_{i=1}^{r} \mathfrak{m}_i} \cong \frac{B}{\mathfrak{m}_1} \times \ldots \times \frac{B}{\mathfrak{m}_r}.
\end{equation}
Since $A$ is reduced, so is $B$. Hence $\cap _{i=1}^{r}\mathfrak{m}_i= 0$. Thus from \Cref{chinese remainder theorem}, we get that $B \cong \frac{B}{\mathfrak{m}_1} \times \ldots \frac{B}{\mathfrak{m}_r}$. Since $k$ has characteristic zero and $B$ is finite type over $k$, for each $i$, $1 \leq i \leq r$, $B/\mathfrak{m}_i$ is a finite, separable field extension of $k$, so $\Omega_{\frac{B}{\mathfrak{m}_i}/k}=0$; see \cite[Tag090W]{Stacksproject}). Finally we conclude $\Omega_{B/k}=0$, since as abelian groups
$$\displaystyle{\Omega_{B/k} \cong \oplus_{i=1}^{r} \Omega_{\frac{B}{\mathfrak{m}_i}/k},}. $$ Thus $B$ is \'etale over $k,$ as desired.
\end{proof}
\begin{remark}
With additional restrictions on $A$, the reducedness hypothesis on $A$ in \Cref{formally unramified plus reduced implies ind-etale} becomes redundant. For example, when $k$ is a perfect field of any characteristic, if $\Omega_{A/k}=0$ and additionally $A$ is noetherian; or a local ring with maximal ideal $\mathfrak{m}$ such that $\underset{n \in \mathbb{N}}{\cap}\mathfrak{m}^n=0$; or $A$ is an $\mathbb{N}$-graded $k$-algebra with $A_0$ is noetherian, then $A$ is automatically reduced; see \cite[Theorem 3.1, Corollary 3.3, Theorem 3.6]{Alapan} for details.
\end{remark}
\begin{remark}\label{Gabber's counterexample} For any characteristic zero field $k$, Gabber has constructed a $k$-algebra $R_{\infty}$ such that $\Omega_{R_\infty /k}=0$, but $R_{\infty}$ is not reduced. The idea is to first construct a direct system $\{R_i \ | \ i \in \mathbb{N} \}$, of finite dimensional local $k$-algebras such that the maps $R_i \rightarrow R_{i+1}$ are injective and the induced maps $\Omega_{R_i/k} \rightarrow \Omega_{R_{i+1}/k}$ are all zero maps. Then $R_\infty$ is taken to be the union of all $R_i$'s. See \cite[Theorem 2.2]{Alapan} for the details of Gabber's construction. 

\end{remark}

\section{Application to Hochschild homology}\label{HH}
 We give an application of \Cref{formally unramified plus reduced implies ind-etale} in Hochschild homology. We begin by giving a minimal review of Hochschild homology here.

\begin{definition}Let $A$ be a commutative ring over a field $k$. Then the $n$-th Hochschild homology $\mathrm{HH}_n(A/k)$ is defined to be $\mathrm{Tor}_n ^{A \otimes_k A}(A,A).$ 
\end{definition}

\begin{remark}
Note that Hochschild homology can be defined for any associative $k$-algebra which is not necessarily commutative. If we denote $A^{\circ}$ to denote the opposite algebra of $A,$ one can in general define $\mathrm{HH}_n (A) := \mathrm{Tor}^{A \otimes_k A^\circ}_n (A,A).$ Thus, $\mathrm{HH}_n(A)$ is really a ``noncommutative invariant" of $A,$ even if $A$ is a commutative algebra.
\end{remark}{}

\begin{remark}There is an explicit chain complex which can be used to compute Hochschild homology groups in general. It is given by 

$$\cdots \to A \otimes_k A \otimes_k A \to  A\otimes_k A \to A \to 0,  $$

where $A$ lives in degree zero. The differentials $d : A^{\otimes_k {n+1}} \mapsto A^{\otimes_k n}$ are given by $$a_0 \otimes \cdots \otimes a_n \to a_0 a_1 \otimes \cdots \otimes a_n - a_0 \otimes a_1 a_2 \otimes \cdots \otimes a_n + \cdots +(-1)^{n}a_0 \otimes \cdots \otimes a_{n-1} a_n + (-1)^{n+1}a_n a_0 \otimes \cdots \otimes a_{n-1}.$$ The complex described above can be viewed as an object in the derived category of $A$ denoted as $D(A),$ where it is quasi-isomorphic to $A \otimes ^L _{A \otimes_k A^\circ} A.$ This object will be denoted by $\mathrm{HH}(A/k) \in D(A).$

\end{remark}{}
We recall an important result about the object $\mathrm{HH}(A/k).$ The result is phrased using the language of filtered objects in derived categories and we refer the reader to \cite{BMS2} for the necessary definitions. The proposition below is obtained by left Kan extending the Postnikov filtration from the smooth case. 

\begin{proposition}(Hochschild–Kostant–Rosenberg (HKR) filtration)\label{HKR} Let $A$ be a commutative $k$-algebra as before. Then $\mathrm{HH}(A/k)$ -- viewed as an object of (the stable $\infty$-category) $D(A)$ admits a natural, complete,
descending $\mathbb N$-indexed filtration, whose $i$-th graded piece is isomorphic to $\mathbb \wedge^i \mathbb{L} _{A/k}[i]$ for $i \ge 0.$
\end{proposition}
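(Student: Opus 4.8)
The plan is to establish the Hochschild--Kostant--Rosenberg filtration by reducing to the smooth case and then left Kan extending. The key structural fact is that $\mathrm{HH}(-/k)$, regarded as a functor from commutative $k$-algebras to the filtered derived category, is determined by its values on polynomial algebras, since every commutative $k$-algebra is a geometric realization (sifted colimit) of polynomial algebras and $\mathrm{HH}$ commutes with such colimits. So the first step is to recall the classical HKR theorem for a \emph{smooth} (in particular, polynomial) $k$-algebra $R$: the antisymmetrization map identifies $\mathrm{HH}_i(R/k)$ with $\wedge^i_R \Omega_{R/k}$, and more precisely $\mathrm{HH}(R/k) \simeq \bigoplus_{i \ge 0} \wedge^i_R \Omega_{R/k}[i]$ in $D(R)$. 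In the smooth case the cotangent complex $\mathbb{L}_{R/k}$ is concentrated in degree zero and equals $\Omega_{R/k}$, so the graded pieces are exactly $\wedge^i \mathbb{L}_{R/k}[i]$.

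Next I would equip $\mathrm{HH}(R/k)$ for smooth $R$ with a genuine filtered refinement rather than just a splitting: the Postnikov (double-speed) filtration on $\mathrm{HH}$, whose associated graded in filtration weight $i$ is the HKR summand $\wedge^i_R \Omega_{R/k}[i] \simeq \wedge^i \mathbb{L}_{R/k}[i]$. The point of working with the filtration, and not merely the direct sum decomposition, is that the splitting is not functorial enough to survive Kan extension, whereas the filtration assembles into a functor $R \mapsto \mathrm{Fil}^\bullet \mathrm{HH}(R/k)$ valued in complete filtered objects of $D(R)$, functorially in the smooth algebra $R$. I would then left Kan extend this functor from the category of polynomial (equivalently, finitely generated free, or all smooth) $k$-algebras to all simplicial commutative $k$-algebras, hence to all ordinary commutative $k$-algebras $A$. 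Because left Kan extension commutes with the formation of associated graded pieces, and because $\wedge^i \mathbb{L}_{-/k}$ is \emph{defined} as the (nonabelian derived / left Kan extended) $i$-th wedge power of the cotangent complex, the $i$-th graded piece of the extended filtration on $\mathrm{HH}(A/k)$ is precisely $\wedge^i \mathbb{L}_{A/k}[i]$, giving the asserted description of the graded pieces for general $A$.

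The two technical points that need care, and where I expect the main obstacles to lie, are \emph{functoriality} and \emph{completeness}. For functoriality: one must ensure that the Postnikov filtration on $\mathrm{HH}$ of smooth algebras is natural in a way that makes it a genuine functor to filtered objects; this is handled by realizing $\mathrm{HH}$ via the cyclic bar construction and placing the canonical $t$-structure / Whitehead filtration on it, which is automatically functorial. The $\mathbb{N}$-indexing and the identification of graded pieces with $\wedge^i \mathbb{L}[i]$ then follow from the smooth HKR computation together with the universal property of left Kan extension, which commutes with colimits and hence with $\mathrm{gr}$. For completeness: after Kan extension the filtration is exhaustive and separated on each homotopy group in a suitable sense, but establishing \emph{completeness} of the $\mathbb{N}$-indexed filtration requires a connectivity/convergence estimate. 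The essential input is that $\wedge^i \mathbb{L}_{A/k}$ becomes increasingly connected as $i$ grows (each $\wedge^i \mathbb{L}_{A/k}[i]$ sits in homological degrees $\ge i$, since $\mathbb{L}_{A/k}$ is connective), so that the graded pieces are highly connected and the inverse limit defining completeness converges. I would assemble this connectivity estimate and invoke the standard criterion that a descending filtration with increasingly connected graded pieces is complete.

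\begin{remark}
I refer to \cite{BMS2} for the precise formalism of filtered objects and for the construction of the HKR filtration by left Kan extension from the smooth case; the argument sketched above follows that approach.
\end{remark}
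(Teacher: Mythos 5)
Your proposal is correct and follows essentially the same route as the paper, which simply cites \cite{BMS2} and \cite{Morrow} after noting that the filtration is obtained by left Kan extending the (double-speed) Postnikov filtration from the smooth case; your sketch fleshes out exactly that construction, including the right reason for completeness (the $i$-connectivity of $\wedge^i \mathbb{L}_{A/k}[i]$ coming from connectivity of the cotangent complex). No discrepancies to report.
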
{}

\begin{proof}
 See \cite[Proposition 2.28]{Morrow} and \cite[Section 2.2]{BMS2}.
\end{proof}{}

\begin{proposition}\label{hhhh}Let $k$ be a field of characteristic $0$ and $A$ be a reduced commutative $k$-algebra. If $\mathrm{HH}_1(A/k)= 0,$ then $\mathrm{HH}_i(A/k)= 0$ for all $i \ge 1.$

\end{proposition}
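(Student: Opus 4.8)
The plan is to reduce the hypothesis $\mathrm{HH}_1(A/k)=0$ to the condition $\Omega_{A/k}=0$, invoke \Cref{formally unramified plus reduced implies ind-etale} to conclude that $A$ is ind-\'etale, and then feed this back into the HKR filtration of \Cref{HKR} to kill all the higher Hochschild homology. First I would record the standard identification $\mathrm{HH}_1(A/k)\cong \Omega_{A/k}$ for a commutative $k$-algebra $A$. This can be read off directly from the low-degree part of the explicit Hochschild complex: since $A$ is commutative the differential $A\otimes_k A \to A$ vanishes, so $\mathrm{HH}_1(A/k)=(A\otimes_k A)/\mathrm{im}(d_2)$, and this cokernel is exactly the presentation of $\Omega_{A/k}$ via $a\otimes b \mapsto a\,db$. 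Alternatively it drops out of \Cref{HKR}: in homological degree $1$ the only contributing graded piece is $\wedge^1 \mathbb{L}_{A/k}[1]=\mathbb{L}_{A/k}[1]$, whose $H_1$ is $H_0(\mathbb{L}_{A/k})=\Omega_{A/k}$. Either way, the hypothesis $\mathrm{HH}_1(A/k)=0$ is equivalent to $\Omega_{A/k}=0$.

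With $\Omega_{A/k}=0$ in hand and $A$ reduced over the characteristic-zero field $k$, \Cref{formally unramified plus reduced implies ind-etale} shows $A$ is ind-\'etale. As recalled in the introduction, the cotangent complex of an ind-\'etale $k$-algebra is exact; that is, $\mathbb{L}_{A/k}\simeq 0$ in $D(A)$. Now I would apply \Cref{HKR}: the $i$-th graded piece of $\mathrm{HH}(A/k)$ is $\wedge^i \mathbb{L}_{A/k}[i]$. The derived exterior power functor (left Kan extended from polynomial algebras) sends the zero object to the zero object for every $i\ge 1$, so $\wedge^i \mathbb{L}_{A/k}\simeq 0$ for all $i\ge 1$, while the $i=0$ piece is $A$ in degree $0$. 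Thus every graded piece of the filtration vanishes outside homological degree $0$. Because the filtration is complete and, in each fixed homological degree, receives only finitely many (indeed at most one) nonzero contributions, I conclude $\mathrm{HH}(A/k)\simeq A$ concentrated in degree $0$, i.e. $\mathrm{HH}_i(A/k)=0$ for all $i\ge 1$.

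A second, more hands-on route avoids derived exterior powers entirely: writing $A=\varinjlim_\alpha R_\alpha$ as the directed union of its finitely generated (hence \'etale) $k$-subalgebras furnished by the proof of \Cref{formally unramified plus reduced implies ind-etale}, the explicit bar complex $A^{\otimes_k(\bullet+1)}$ commutes with filtered colimits, so $\mathrm{HH}_i(A/k)=\varinjlim_\alpha \mathrm{HH}_i(R_\alpha/k)$; since each $R_\alpha$ is \'etale one has $\mathrm{HH}_i(R_\alpha/k)=0$ for $i\ge 1$ (again by \Cref{HKR}, as $\mathbb{L}_{R_\alpha/k}\simeq 0$), and the colimit vanishes. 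I expect the only genuinely delicate point in either argument to be the reduction step: recognizing that $\mathrm{HH}_1(A/k)=0$ is precisely $\Omega_{A/k}=0$, so that \Cref{formally unramified plus reduced implies ind-etale} becomes applicable. Once ind-\'etaleness is secured, the passage from the single vanishing $\mathrm{HH}_1=0$ to the vanishing of all $\mathrm{HH}_i$ is formal, powered by the acyclicity of $\mathbb{L}_{A/k}$ together with the structure encoded in the HKR filtration.
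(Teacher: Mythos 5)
Your proposal is correct and follows essentially the same route as the paper: identify $\mathrm{HH}_1(A/k)$ with $\Omega_{A/k}$, invoke \Cref{formally unramified plus reduced implies ind-etale} to get ind-\'etaleness and hence $\mathbb{L}_{A/k}\simeq 0$, and then use the vanishing of the graded pieces $\wedge^i\mathbb{L}_{A/k}[i]$ for $i\ge 1$ together with completeness of the HKR filtration to conclude $\mathrm{HH}(A/k)\simeq A[0]$ (the paper phrases the last step as $\mathrm{Fil}^1\simeq R\varprojlim_n \mathrm{Fil}^n\simeq 0$). Your alternative second route via filtered colimits of \'etale subalgebras is also valid but is not the argument the paper gives.
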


\begin{proof}We note that $\mathrm{HH}_1(A/k) = \mathrm{Tor}_1^{A \otimes_k A} (A,A) \simeq \Omega_{A/k}$. Therefore, our hypothesis implies that $\Omega_{A/k}= 0.$ Since $A$ is reduced, it follows from \Cref{formally unramified plus reduced implies ind-etale} that $A$ is in fact ind-\'etale and therefore $\mathbb L_{A/k}$ is exact. So $\mathbb{ L}_{A/k}$ is isomorphic to $0$ when viewed as an object of $D(A)$. Let $\text{Fil}^n_{\text{HKR}}(\mathrm{HH}(A/k))$ denote the HKR filtration on $\mathrm{HH}(A/k).$ Since the $i$-th graded piece for the HKR filtration is zero for $i \ge 1$ by \Cref{HKR},, we see that 
\begin{equation}\label{HKRisom}
    \text{Fil}^n_{\text{HKR}}(\mathrm{HH}(A/k)) \simeq \text{Fil}^1_{\text{HKR}}(\mathrm{HH}(A/k)) 
\end{equation}{}for $n \ge 1$. Thus, we have an exact triangle 

$$\text{Fil}^1_{\text{HKR}}(\mathrm{HH}(A/k)) \to \mathrm{HH}(A/k) \to \wedge^0 \mathbb{L}_{A/k}[0] = A[0].$$
Using the fact that the HKR filtration is complete, we argue that $\mathrm{HH}(A/k) \simeq A[0]$; see for e.g., \cite[Definition~5.1]{BMS2} for the definition of a filtered object in the derived category being complete. Indeed, from the completeness of the HKR filtration and \Cref{HKRisom}, it follows that $$0 \simeq R \varprojlim_{n}\text{Fil}^n_{\text{HKR}}(\mathrm{HH}(A/k)) \simeq \text{Fil}^1_{\text{HKR}}(\mathrm{HH}(A/k)).$$ However, by the exact triangle above, that implies that $\mathrm{HH}(A/k) \simeq A[0].$ This finishes the proof.
\end{proof}{}

\begin{eg}\label{exampleBen} \Cref{hhhh} is false if we do not assume the ring to be commutative. A natural source of counterexamples arise from the theory of differential operators. For $n\ge 1$, let $A_n$ denote the $n$-th Weyl algebra over a field $k$ of characteristic $0$; one can also think of $A_n$ as the ring of differential operators of the polynomial ring in $n$ variables over $k$. Concretely, $A_n$ is an associative unital algebra over $k$ generated by $x_1, \ldots, x_n$ and $\partial^1, \ldots, \partial^n$ modulo the relations $x_i x_j = x_j x_i,$ $\partial_i \partial_j = \partial_j \partial_i$ and $\partial_i x_j -x_j \partial_i = \delta_{ij},$ where $\delta_{ij}$ is the Kronecker delta symbol.
\vspace{2mm}

There is a natural increasing and multiplicative filtration on $A_n$ called the order filtration. Since $k$ has characteristic $0$, the associated graded algebra of $A_n$ under the order filtration is a commutative polynomial algebra in $2n$ variables. This implies that $A_n$ is a \textit{reduced} noncommutative $k$-algebra. We note that

$$
\mathrm{HH}_i (A_n) =
\begin{cases}
k & \text{if}\, i = 2n , \\
0 & \text{otherwise};
\end{cases}
$$
see \cite[section 3.1]{Richard} or \cite[section 5]{Sridharan}. This gives a very natural counterexample to \Cref{hhhh} if the ring is not assumed to be commutative. Note that $A_n$ is even an ``almost commutative ring" in the sense of filtered rings.
\end{eg}

\section{Application to $K$-regularity}\label{Kreg}
We begin by very briefly recalling the definition of the higher $K$-groups. For any associative and unital ring $A,$ one can define the nonconnective $K$-theory spectrum $K(A)$ \cite{tttt}, \cite{kbook}. One defines the $K$-groups of $A,$ denoted by $K_n(A)$ for $n \in \mathbb{Z}$ to be the $n$-th homotopy group of the spectrum $K(A),$ i.e.,

$$K_n(A) := \pi_n (K(A)).$$

\begin{remark}\label{elementary}
Let us give more elementary descriptions of some of the $K$-groups that are of relevance to us. We note that $K_0(A)$ is the Grothendieck group of $A$. which is obtained by group completing the monoid of finitely generated projective $A$-modules.

Now we explicitly describe $K_1(A)$; see \cite[Chapter~III, Section~1]{kbook}. For a ring $A,$ note that we have a sequence of group inclusions

$$\mathrm{GL}_1(A) \hookrightarrow \mathrm{GL}_2(A) \hookrightarrow \ldots \hookrightarrow \mathrm{GL}_n(A) \hookrightarrow \ldots.$$
where the inclusion $\mathrm{GL}_n(A) \hookrightarrow \mathrm{GL}_{n+1}(A)$ takes a matrix $M$ to
$\begin{bmatrix}
1 & 0\\
0 & M
\end{bmatrix}$.
Let us denote the group obtained by taking union of the above sequence of inclusions by $\mathrm{GL}(A).$ Let $[\mathrm{GL}(A), \mathrm{GL}(A)]$ denote the derived subgroup, i.e., the subgroup generated by the commutators. Then one has
$$K_1(A) = \mathrm{GL}(A)/ [\mathrm{GL}(A), \mathrm{GL}(A)].$$

The negative $K$-groups can also be described explicitly, in an inductive fashion, using an earlier construction of Bass. For $n<0,$ one has

$$K_n(A) = \mathrm{Coker} \left(K_{n+1}(A[t]) \times K_{n+1}(A[t^{-1}]) \to K_{n+1}(A[t, t^{-1}])\right).$$

The above description can be obtained by covering $\mathbb{P}^1_{A}$ by the two standard affine opens $\text{Spec}\, A[t]$ and $\text{Spec}\, A[t^{-1}]$ and using a Mayer--Vietoris sequence argument (see \cite[Theorem~6.1]{tttt}).

\end{remark}

\begin{definition}\label{teach} A commutative $k$-algebra $A$ is defined to be \textit{$K_n$-regular} if the natural map 

$$K_n(A) \to K_n(A[x_1, \ldots, x_r]) $$ is an isomorphism for all $r \ge 0.$

\end{definition}{}

In \cite[Question 2.9]{weibel}, Weibel asked the following questions.
\begin{question}[Weibel]
Let $R$ be a commutative noetherian ring of Krull dimension $d.$  

\begin{enumerate}
    \item Is $K_n(R) = 0$ for $n < -d$?
    \item Does $R$ happen to be $K_n$-regular for $n \le -d$?
\end{enumerate}{}
\end{question}{}
In \cite{weibel}, Weibel also answered the question when $d=0$ and $1.$ This question was answered in \cite{weibel2} by Cortiñas, Haesemeyer, Schlichting and Weibel for finite type algebras over a field of characteristic zero. The question was completely answered by Kerz, Strunk, and Tamme in \cite{kerz1}. See also \cite{kerz2}. Note that when $R$ is a regular noetherian ring, then all the negative $K$-groups of $R$ vanish \cite{bas}.

It was proven by Quillen in \cite{qln} that a regular noetherian ring is $K_n$-regular for all integers $n.$ The following was conjectured (and proven in dimensions $\le 1$) by Vorst in \cite{vorst}, which predicts the converse. 

\begin{conjecture}[Vorst]\label{vorst}
If $R$ is a commutative ring of dimension $d,$ essentially of finite type over a
field $k$, then $K_{d+1}$-regularity implies regularity.
\end{conjecture}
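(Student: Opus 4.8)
The plan is to prove the conjecture first over a field $k$ of characteristic zero --- the setting of the present paper --- and to isolate positive characteristic as a separate, harder problem. Since $R$ is essentially of finite type over $k$, it is a localization of a finite type $k$-algebra and hence noetherian of finite Krull dimension $d$; thus, unlike the situation in the body of this paper, I may freely use noetherian, finite-dimensional techniques. Because $k$ has characteristic zero it is perfect, so \emph{regular} and \emph{smooth over $k$} coincide for $R$; the goal is therefore to show that $K_{d+1}$-regularity forces $\Omega^1_{R/k}$ to be locally free of the expected rank, equivalently that the singular locus of $R$ is empty.

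The central mechanism I would use is the translation of $K$-regularity into the vanishing of cyclic-homology-theoretic invariants. Recall that $K_{d+1}$-regularity is equivalent to the vanishing of all iterated groups $N^p K_{d+1}(R)$ for $p \ge 1$. In characteristic zero these are accessible through the comparison between algebraic $K$-theory and cyclic homology: by Goodwillie's theorem, relative $K$-theory agrees with relative negative cyclic homology for nilpotent extensions, and by the theorem of Corti\~nas the infinitesimal $K$-theory $K^{\mathrm{inf}} = \mathrm{fib}(K \to HN)$ satisfies excision. Combining this with Haesemeyer's cdh-descent for homotopy $K$-theory $KH$ --- which rests on resolution of singularities --- one obtains, following Corti\~nas, Haesemeyer, Schlichting and Weibel \cite{weibel2}, a descent spectral sequence that expresses the failure of $K$-regularity of $R$ in terms of the cdh-cohomology groups $H^i_{\mathrm{cdh}}(R, \Omega^j_{-/k})$ of the sheaves of K\"ahler differential forms.

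With this dictionary in hand the argument becomes a dimension count. Since $\dim R = d$, the cdh-cohomological dimension of $R$ is at most $d$, so $H^i_{\mathrm{cdh}}(R,\Omega^j_{-/k}) = 0$ for $i > d$; consequently only finitely many graded pieces contribute, and the hypothesis of $K_{d+1}$-regularity kills precisely the top obstruction class, which lives in the cohomology of the top-degree differential forms. I would then identify this surviving obstruction with the discrepancy between $R$ and its smooth cdh-models --- i.e.\ with the presence of singularities --- so that its vanishing means $R$ is smooth. At this last step the results of the present paper feed in: \Cref{propb} guarantees that the top wedge power of $\Omega_{-/k}$ is genuinely nonzero at a transcendence-generic point of any putative singular component, supplying the nonvanishing input needed to convert ``obstruction vanishes'' into ``singular locus is empty,'' in the same spirit in which \Cref{intro1} uses $\wedge^{d+1}\Omega_{A/k}$ to control negative $K$-groups.

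The hard part will be the passage encoded in the second paragraph: the precise identification of $N^p K_{d+1}$ with cdh-cohomology of differential forms depends on cdh-descent for both $KH$ and $K^{\mathrm{inf}}$, and the former is available only because resolution of singularities is. This is exactly why the method proves the conjecture over characteristic-zero fields but says nothing in characteristic $p$, where resolution fails and the Hochschild--cyclic comparison degenerates. Extending the argument to positive characteristic would require replacing resolution by alterations, or by the pro-cdh and trace-method machinery of Kerz, Strunk and Tamme \cite{kerz1}, and this is where I expect the genuine difficulty --- and the bulk of any new work --- to lie.
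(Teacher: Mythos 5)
First, a point of framing: the paper does not prove this statement --- it is stated as a \emph{conjecture}, with the characteristic-zero, essentially-of-finite-type case attributed to Corti\~nas--Haesemeyer--Weibel \cite{weibel3}, and the positive-characteristic case still open (the works \cite{hess}, \cite{kerz3} treat variants). So there is no paper-internal proof to compare against; your proposal must be judged as an attempted proof of an open conjecture, and as such it falls short in two concrete ways. (1) The conjecture as stated allows $k$ of any characteristic, and you explicitly defer characteristic $p$; what you outline is at best a re-derivation of the known theorem of \cite{weibel3}, not of the conjecture. (2) Even in characteristic zero, the decisive step is missing. Your first two reductions are sound and are indeed the CHW toolbox: $K_{d+1}$-regularity as vanishing of the groups $N^pK_{d+1}$, Goodwillie's theorem for nilpotent extensions, Corti\~nas's excision for $K^{\mathrm{inf}}$, and Haesemeyer's cdh-descent for $KH$, yielding control of the failure of $K$-regularity by cdh-cohomology of the sheaves $\Omega^j$. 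But the passage from ``the relevant cdh-cohomological obstructions vanish'' to ``the singular locus is empty'' is asserted (``I would then identify this surviving obstruction with the presence of singularities''), and this is precisely the technical heart of \cite{weibel3}: it requires, among other things, Vorst's downward propagation of $K_n$-regularity, an induction on dimension via conductor squares and resolution of singularities, and a nonvanishing theorem for cdh-cohomology of forms on singular schemes. It is not a single ``top obstruction class'' killed by one hypothesis, and no dimension count supplies it.

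The specific patch you propose for that hole does not work. \Cref{propb} says that a $k$-algebra injection $k[t_1,\ldots,t_s]\hookrightarrow A$ forces $d\iota(t_1)\wedge\cdots\wedge d\iota(t_s)\neq 0$ in $\wedge^s_A\Omega_{A/k}$; it detects \emph{transcendence cardinality}, not singularity. Its conclusion holds identically for singular and smooth rings of the same dimension: the cuspidal cubic $R=k[x,y]/(y^2-x^3)$ contains $k[t]$ and has $\wedge^1\Omega_{R/k}\neq 0$, exactly as the smooth line does, so \Cref{propb} cannot distinguish them and gives no ``nonvanishing input at a singular component.'' This is consistent with how the paper itself uses these results: \Cref{weib} and \Cref{vorstcon} extract non-noetherian \emph{instances} of Weibel's question and of Vorst's conjecture in the degenerate range (dimension bounds on finite-type subalgebras, and the zero-dimensional case $\Omega_{A/k}=0$ where ind-\'etaleness is available via \Cref{formally unramified plus reduced implies ind-etale}) --- the paper nowhere claims its differential-form criteria can see singularities in positive dimension, and indeed \Cref{q: non-noetherian vorst} is left open for exactly this reason. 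If you want a genuine proof in characteristic zero, you should follow \cite{weibel3} in full rather than route through this paper's results; for characteristic $p$, the pro-cdh and trace-method machinery of \cite{kerz1}, \cite{kerz3} is, as you say, where the real difficulty lies.
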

When $R$ is essentially of finite type over a field $k$ of characteristic zero, Cortiñas, Haesemeyer, and Weibel proved that the above conjecture holds \cite{weibel3}. Positive characteristic variants have been studied by Geisser and Hesselholt in \cite{hess} and
Kerz, Strunk, and Tamme in \cite{kerz3}.

We point out that all the results above makes certain finiteness assumptions. However, as Vorst mentions in \cite{vorst}, it is not clear if the finiteness assumptions are necessary in \Cref{vorst}. As a consequence of \Cref{formally unramified plus reduced implies ind-etale}, we will deduce some instances of Weibel's question (see \Cref{weib}) and Vorst's conjecture (see \Cref{vorstcon}) without any finiteness or even noetherian assumptions.

\begin{proposition}\label{weib} Let $A$ be a commutative $k$-algebra over a field $k$ of characteristic zero such that the module of $(d+1)$-forms $\wedge^{d+1}\Omega_{A/k}=0.$ Then 

\begin{enumerate}
    \item $K_n(A) = 0$ for $n<-d.$
    \item $A$ is $K_n$-regular for all $n \le -d.$ 
\end{enumerate}

\end{proposition}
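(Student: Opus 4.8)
The plan is to reduce the non-noetherian statement to the known finite-type results through a continuity (filtered-colimit) argument, the dimension bound being supplied by our work on transcendence cardinality. First I would apply \Cref{p: transcendence degree}(7) with $s = d+1$: the hypothesis $\wedge^{d+1}\Omega_{A/k} = 0$ forces the $k$-transcendence cardinality of $A$ to be at most $d$. Then \Cref{p: transcendence degree}(2) shows that every finite type $k$-subalgebra $B \subseteq A$ has Krull dimension at most $d$. Writing $A = \varinjlim_{\alpha} A_\alpha$ as the directed union (filtered colimit) of all its finite type $k$-subalgebras, each $A_\alpha$ is thus a noetherian $k$-algebra, finite type over $k$, with $\dim A_\alpha \le d$. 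The crucial input is that nonconnective $K$-theory commutes with filtered colimits of rings, so $K_n(A) \cong \varinjlim_{\alpha} K_n(A_\alpha)$ for every $n \in \mathbb{Z}$; this is precisely what lets us dispense with the noetherian hypotheses present in the cited theorems.

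For part (1), I would invoke the resolution of Weibel's question for finite type algebras over a characteristic-zero field, due to Cortiñas, Haesemeyer, Schlichting and Weibel \cite{weibel2} (and in full generality \cite{kerz1}): one has $K_n(A_\alpha) = 0$ for $n < -\dim A_\alpha$. Since $\dim A_\alpha \le d$, this gives $K_n(A_\alpha) = 0$ whenever $n < -d$, and passing to the colimit yields $K_n(A) = 0$ for $n < -d$. For part (2), the same results give that each $A_\alpha$ is $K_n$-regular for $n \le -\dim A_\alpha$, hence for $n \le -d$; that is, $K_n(A_\alpha) \to K_n(A_\alpha[x_1, \ldots, x_r])$ is an isomorphism for all $r \ge 0$. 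Because polynomial extensions commute with filtered colimits, $A[x_1, \ldots, x_r] = \varinjlim_{\alpha} A_\alpha[x_1, \ldots, x_r]$, so by continuity of $K$-theory the map $K_n(A) \to K_n(A[x_1, \ldots, x_r])$ is a filtered colimit of isomorphisms, hence itself an isomorphism. Thus $A$ is $K_n$-regular for all $n \le -d$.

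The main obstacle is conceptual rather than computational: the entire passage from the non-noetherian $A$ to its noetherian finite-type subalgebras rests on the continuity of nonconnective $K$-theory under filtered colimits, and on having the dimension bound available uniformly for all finite-type subalgebras (which is exactly what \Cref{p: transcendence degree}(2) supplies via \Cref{unramified implies integral}). The only point requiring care is that the bound is an inequality $\dim A_\alpha \le d$ rather than an equality, so I must check the inequality runs in the right direction — namely that the target ranges $n < -d$ and $n \le -d$ are contained in the per-subalgebra ranges $n < -\dim A_\alpha$ and $n \le -\dim A_\alpha$, which indeed they are. Note that no reducedness assumption on $A$ is needed, since the finite-type results of \cite{weibel2} hold for arbitrary finite type schemes over the field.
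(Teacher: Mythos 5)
Your proposal is correct and follows essentially the same route as the paper: bound the $k$-transcendence cardinality by $d$ via \Cref{unramified implies integral} (equivalently \Cref{p: transcendence degree}(7)), deduce via \Cref{p: transcendence degree}(2) that every finite type $k$-subalgebra has Krull dimension at most $d$, and then apply \cite[Theorem~6.2]{weibel2} together with the continuity of nonconnective $K$-theory under filtered colimits. Your write-up merely makes explicit the colimit bookkeeping (including $A[x_1,\ldots,x_r]=\varinjlim_\alpha A_\alpha[x_1,\ldots,x_r]$) that the paper leaves implicit.
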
{}

\begin{proof}
By \Cref{unramified implies integral}, the $k$-transcendence cardinality of $A$ is $\le d.$ It then follows from (2), \Cref{p: transcendence degree} that any finite type $k$-subalgebra of $A$ must have Krull dimension $\le d.$ Therefore, by using \cite[Theorem~6.2]{weibel2} and taking filtered colimits over all finite type $k$-subalgebras of $A$, we obtain the desired conclusion.
\end{proof}


\begin{proposition}\label{vorstcon}
Let $A$ be a commutative $k$-algebra over a field $k$ of characteristic zero such that the module of K\"ahler differentials $\Omega_{A/k}=0.$ If $A$ is $K_1$-regular, then $A$ is ind-\'etale. Moreover, $A$ is $K_n$-regular for all integers $n.$
\end{proposition}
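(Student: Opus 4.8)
The plan is to leverage the machinery already built up in the previous sections, so that the only genuinely new input is the role of $K_1$-regularity. First I would observe that the hypothesis $\Omega_{A/k}=0$ is the vanishing of the module of $1$-forms, i.e.\ the case $s=1$ of $\wedge^s_A\Omega_{A/k}=0$. By \Cref{unramified implies integral corollary}, this already forces $A$ to be integral over $k$, and by part (7) of \Cref{p: transcendence degree} (equivalently \Cref{vanishing Kahler differential bounds transcendence degree} with $s=1$) the $k$-transcendence cardinality of $A$ is at most $0$. Consequently every finitely generated $k$-subalgebra $B\subseteq A$ is integral over $k$, hence Artinian, and in particular has Krull dimension zero.

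The heart of the argument is to use $K_1$-regularity to \emph{extract reducedness}, which is the hypothesis that \Cref{formally unramified plus reduced implies ind-etale} needs but that is not assumed here. The plan is to invoke Vorst's conjecture in the form already known in characteristic zero: for a $k$-algebra essentially of finite type over $k$, $K_{d+1}$-regularity implies regularity \cite{weibel3}. Since the transcendence-cardinality bound shows each finitely generated $B\subseteq A$ has $d=0$, the relevant statement is that $K_1$-regularity of $B$ implies $B$ is regular, and a zero-dimensional regular ring is a finite product of fields, hence reduced and in fact \'etale over $k$ in characteristic zero. The key point I must verify is that $K_1$-regularity passes appropriately between $A$ and its finitely generated subalgebras: one direction is that $K_1$-regularity is compatible with filtered colimits (so that $K_1$-regularity of $A$, together with writing $A$ as the colimit of its finitely generated subalgebras, yields the needed regularity statement for enough subalgebras), and I would either cite this colimit-compatibility of $K$-regularity or argue it directly from the Bass--Quillen-type descriptions recalled in \Cref{elementary}.

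Once reducedness of $A$ is in hand, the conclusion is immediate: $A$ is reduced with $\Omega_{A/k}=0$, so \Cref{formally unramified plus reduced implies ind-etale} applies directly to give that $A$ is ind-\'etale. For the final assertion that $A$ is $K_n$-regular for all integers $n$, I would use that $A$, being ind-\'etale over the characteristic-zero field $k$, is a filtered colimit of finite products of finite separable field extensions of $k$; each such factor is a regular noetherian ring and hence $K_n$-regular for all $n$ by Quillen \cite{qln}, and $K_n$-regularity is preserved under finite products and filtered colimits, giving $K_n$-regularity of $A$ for every integer $n$.

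The step I expect to be the main obstacle is the transfer of $K_1$-regularity through the colimit and its interaction with the finiteness hypotheses built into \cite{weibel3}: Vorst's theorem is stated for rings essentially of finite type over $k$, whereas $A$ itself carries no finiteness. The delicate part is arranging the argument so that $K_1$-regularity is applied only to the finitely generated subalgebras $B$ (where \cite{weibel3} legitimately applies), and then descending reducedness of each $B$ to reducedness of $A=\varinjlim B$, rather than trying to apply the cited regularity result to $A$ directly.
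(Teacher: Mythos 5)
Your overall skeleton (reducedness is the missing hypothesis; once you have it, \Cref{formally unramified plus reduced implies ind-etale} gives ind-\'etaleness, and the final $K_n$-regularity claim follows from Quillen plus compatibility of $K$-theory with filtered colimits) matches the paper. But the step you yourself flag as the main obstacle is a genuine gap, not just a delicate point to be arranged: your route to reducedness requires applying the Corti\~nas--Haesemeyer--Weibel theorem to finitely generated subalgebras $B\subseteq A$, and for that you need each such $B$ (or at least a cofinal family of them) to be $K_1$-regular. $K_1$-regularity does \emph{not} descend from a filtered colimit to its terms: knowing that $\varinjlim K_1(B)\to\varinjlim K_1(B[x_1,\ldots,x_r])$ is an isomorphism says nothing about the individual maps $K_1(B)\to K_1(B[x_1,\ldots,x_r])$. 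The colimit compatibility you invoke only goes the other way (from the pieces to the colimit), which is exactly the direction used for the \emph{last} assertion of the proposition, not for extracting reducedness. As written, there is no way to get the hypotheses of \cite{weibel3} to apply to any subalgebra, so the reducedness step does not go through.

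The paper avoids Vorst's theorem entirely and extracts reducedness from $K_1$-regularity of $A$ itself by an elementary determinant argument: the determinant gives a surjection $\det\colon K_1(R)\to R^{\times}$ splitting the natural map $R^{\times}=\mathrm{GL}_1(R)\to K_1(R)$, and the commutative square relating $K_1(A)\to K_1(A[t])$ to $A^{\times}\to (A[t])^{\times}$ shows that $K_1$-regularity forces $A^{\times}\to (A[t])^{\times}$ to be surjective. If $a\in A$ were a nonzero nilpotent, then $1+at$ would be a unit of $A[t]$ not in the image of $A^{\times}$, a contradiction. You should replace your appeal to \cite{weibel3} with this (or some other argument that uses only $K_1$-regularity of $A$ itself); with that substitution the remainder of your plan is correct and agrees with the paper.
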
{}

\begin{proof}
One observes that $A$ being $K_1$-regular implies that $A$ is reduced. In order to see this, we note some well-known general constructions. For any commutative ring $R,$ taking determinant induces a natural group homomorphism $\text{det}: \mathrm{GL}(R) \to R^{\times},$ which factors to give a map $$\text{det}: K_1(R) \to R^{\times}.$$ Here $R^\times$ is the abelian group of units of $R$. Note that there is also a natural map $R^\times = \mathrm{GL}_1(R) \to K_1(A)$ which admits a section provided by $\text{det}: K_1(R) \to R^{\times}.$
\vspace{2mm}
Coming back to our situation, the $K_1$-regularity of $A$ in particular implies that the map $K_1(A) \to K_1(A[t])$ induced by the natural inclusion $A \hookrightarrow A[t]$ is an isomorphism. We have the following commutative diagram where the vertical arrows are given by the `det' maps and the horizontal maps are induced by the inclusion $A \hookrightarrow A[t].$
\begin{center}
\begin{tikzcd}
A^{\times} \arrow[rr]       &  & {(A[t])^{\times}}     \\
K_1(A) \arrow[u] \arrow[rr] &  & {K_1(A[t])} \arrow[u]
\end{tikzcd}
\end{center}{}
Since the vertical maps and the bottom horizontal maps are surjective, the inclusion $A^\times \hookrightarrow (A[t])^\times$ is also surjective. If there were a nonzero nilpotent element $a \in A,$ the element $1+ at \in (A[t])^\times$ would not come from $A^\times.$ Thus we conclude that $A$ is reduced. \Cref{formally unramified plus reduced implies ind-etale} now implies that $A$ is ind-\'etale. For the last part of the proposition, we again note that the $K$-groups commute with taking direct limits and \'etale algebras are $K_n$-regular for all $n,$ which yields the claim.
\end{proof}
Note that \Cref{vorstcon} provides a criteria for an algebra being ind-\'etale in terms of certain condition on the differential forms and $K_1$-regularity. It seems to be an interesting question to find higher dimensional generalizations of this proposition, which would give a criteria for ind-smoothness. Motivated by \cref{vorstcon}, we formulate the following question which we do not know how to answer.
\begin{question}\label{q: non-noetherian vorst}
    Let $k$ be a field of characteristic zero and $A$ be a $k$-algebra such that $\wedge^{d+1}\Omega_{A/k}= 0$. Suppose that $A$ is $K_n$ regular for all $n.$ Is $A$ necessarily a direct limit of smooth $k$-algebras?
\end{question}
The above question imposes $K_n$-regularity condition on the algebra $A,$ which is motivated by Vorst's conjecture. However, we point out that there is a difference between the formulation of classical Vorst conjecture and \Cref{q: non-noetherian vorst} or \Cref{vorstcon}. In the classical version (see \Cref{vorst}) the $K$-regularity assumption and the conjectured regularity, both involve absolute notions such as $K$-groups and regular rings; the essentially finite type hypothesis serves as an assumption making other techniques (such as the crucial usage of the \textit{cdh} topology) applicable in the problem. But, in \Cref{q: non-noetherian vorst}, the $K$-regularity assumption involves absolute notions whereas the desired conclusion in the question, namely the ind-$k$-smoothness is a relative notion (as it refers to the base $k$); the $\wedge^{d+1}\Omega_{A/k}= 0$ assumption however is again a relative assumption. The latter ensures for example, that all finite type $k$-subalgebras of $A$ have dimension at most $d$ (by \Cref{p: transcendence degree}, (2) and (7)).
\printbibliography[title={References}]

\end{document}